\theoremstyle{definition}
\newtheorem{dfn}{Definition}[section]
\theoremstyle{remark}
\newtheorem{obs}[dfn]{Observation}
\newtheorem{rmk}[dfn]{Remark}
\theoremstyle{plain}
\newtheorem{prp}[dfn]{Proposition}
\newtheorem{lem}[dfn]{Lemma}
\newtheorem{thm}[dfn]{Theorem}
\newtheorem{cor}[dfn]{Corollary}
\title{Irreducible representations of tree automorphism groups into Pontryagin spaces}
\author{Federico Viola}
\date{March 17, 2026}
\begin{document}

\maketitle

\section*{Abstract}

\textit{Let $G=\mathrm{Aut}(T)$ be the automorphism group of a regular tree $T$. We study continuous irreducible representations of $G$ that preserve a continuous strongly nondegenerate sesquilinear form of finite index on a Hilbert space. These are already classified for index $0$ (unitary case) and for index $1$. We show that there are no more representations for index $>1$, which completes the classification.} \\

\section{Introduction}

Let $G=\mathrm{Aut}(T)$ be the automorphism group of a regular tree $T$, i.~e.~a tree where all vertices have the same degree $d\geq 3$. Irreducible algebraic representations of $G$ have been first studied and classified by Ol'shanskii \cite{olsh}. Later, a complete and detailed classification of unitary representations of $G$ was provided by Figa-Talamanca and Nebbia \cite{talanebbia}. \\

Beyond the unitary world, a focus of much recent interest is representations preseving a sesquilinear form of finite index, or representations into Pontryagin spaces, first introduced in \cite{pontryagin}. The unitary case corresponds to index $0$. All index $1$ representations of $G$ have been classified by Burger, Iozzi, and Monod \cite{bim}. \\

The main result of this paper is that G does not admit continuous irreducible representations of index $>1$. This is in strong contrast to the case of $\mathrm{SL}_2(\mathbb{R})$ and other rank one Lie groups, which admit representations of arbitrarily high index \cite{delzantpy,monodpy}. \\

\begin{thm} \label{first}
    Let $G=\mathrm{Aut}(T)$, where $T$ is a locally finite regular tree. Let $\pi$ be a continuous irreducible representation of $G$ into a Hilbert space $H$. If $\pi$ preserves a continuous strongly nondegenerate sesquilinear form of finite index $p\geq 0$, then $p=0$ or $p=1$.
\end{thm}

Strongly nondegenerate sesquilinear forms were defined in \cite{bim} as being associated with positive definite forms that generate a complete Hilbert norm. Traditionally, Hilbert spaces equipped with a strongly nondegenerate sesquilinear form are called Krein spaces, and Pontryagin spaces if the sesquilinear form has finite index. \\

Together with \cite{olsh,talanebbia,bim}, Theorem \ref{first} completes the classification of all continuous irreducible representations of $G$ to all Pontryagin spaces. \\

The study of Hilbert spaces with indefinite sesquilinear forms dates back to Pontryagin \cite{pontryagin}, Iohvidov-Krein \cite{iohkrein}, and Naimark \cite{naimark1,naimark2,naimark3,naimark4}; further treatment can be found in \cite{aziioh,bognar,iohkrlan}. More recently, Gromov (\cite{gromov}, Section 6) has suggested the study of the infinite-dimensional symmetric spaces of finite rank that can be defined subsequently. The case of the infinite-dimensional real hyperbolic space has been studied by Burger, Iozzi, and Monod in \cite{bim}, and later by Monod and Py in \cite{monodpy}, while the general case has been studied by Duchesne in recent times \cite{ducphd,duc2013,duc2015,duc2023}. It turns out (\cite{duc2015}, Corollary 1.10) that the only irreducible infinite-dimensional symmetric spaces of non-positive curvature and finite rank are, up to homothety, $\mathrm{O}(p,\infty)/\mathrm{O}(p)\times\mathrm{O}(\infty)$, $\mathrm{U}(p,\infty)/\mathrm{U}(p)\times\mathrm{U}(\infty)$, and $\mathrm{Sp}(p,\infty)/\mathrm{Sp}(p)\times\mathrm{Sp}(\infty)$, and that their isometry groups are, up to a subgroup of index two in the complex case, exactly the projective versions of $\mathrm{O}(p,\infty)$, $\mathrm{U}(p,\infty)$, and $\mathrm{Sp}(p,\infty)$ (\cite{duc2023}, Theorem 3.3). Therefore, studying representations of $G$ that preserve a nondegenerate sesquilinear form of finite index is the same as studying actions of $G$ by isometries on infinite-dimensional symmetric spaces of nonpositive curvature and finite rank. \\

In another paper \cite{sln}, we consider the case where $G$ is a higher-rank algebraic group over a non-archimedean local field $\mathbb{F}$, and we show that, at least in the case $G=\mathrm{SL}_n(\mathbb{F})$ ($n\geq 3$), there are no continuous fixed point-free actions of $G$ on any infinite-dimensional symmetric space of nonpositive curvature and finite rank, which implies that there are no continuous irreducible representations of $G$ into any Pontryagin space. \\

Turning back to our paper, we will actually show Theorem \ref{first} in more generality: not only for $G=\mathrm{Aut}(T)$, but for all closed noncompact subgroups of $\mathrm{Aut}(T)$ that act transitively on the boundary at infinity $\partial T$ and satisfy Tits' independence property (P) (\cite{tits}, 4.2). We may also drop the assumption that the tree is regular and start from any tree where every vertex has degree $\geq 3$: our assumptions on $G$ imply that the tree is either regular or biregular (the latter meaning that there exist $r,s\geq 3$ such that all vertices at even distance from a given vertex have degree $r$, and all those at odd distance have degree $s$). \\

The groups we consider include universal groups associated with $2$-transitive permutation groups, which were introduced by Burger and Mozes in (\cite{burgermozes}, 3.2) and then studied by Amann in his PhD thesis (\cite{amann}, Chapter 3). More recently, Reid and Smith \cite{reidsmith} provided a characterization of tree automorphism groups with Tits' independence property (P) in terms of combinatorial structures called local action diagrams. \\

Unitary representations of the groups we consider have been classified by Amann (\cite{amann}, Chapter 2). The representations fall into three families: super-cuspidal, special, and spherical, according to the conjugacy class of a maximal compact subgroup that fixes a nontrivial vector. \\

Our result, formulated in this generality, is the following:

\begin{thm} \label{Main}
Let $T$ be a locally finite tree where every vertex has degree $\geq 3$. Let $G$ be a closed noncompact subgroup of $\mathrm{Aut}(T)$, with Tits' Property (P) and transitive action on $\partial T$. Let $\mathbb{K}\in\{\mathbb{R},\mathbb{C},\mathbb{H}\}$, and let $(\pi,H)$ be a continuous irreducible representation of $G$ to a $\mathbb{K}$-Hilbert space. If $\pi$ preserves a continuous strongly nondegenerate sesquilinear form of finite index $p\geq 0$, then $p=0$ or $p=1$. Moreover, if $p=1$, then the representation $(\pi,H)$ is isomorphic to the tensor product with $\mathbb{K}$ of a real representation.
\end{thm}

Here, $\mathbb{H}$ denotes the quaternions. Theorem \ref{Main} obviously implies Theorem \ref{first}, and the final statement about the case $p=1$ confirms that, also in our generality, the representations classified in \cite{bim} exhaust all possible non-unitary finite-index representations. \\

The paper is organized as follows. In Section 2 we give a few preliminaries on trees and on the independence property, and we explain why we can restrict to a representation that is irreducible in the algebraic sense (the relevant technical results are shown in the Appendix). In Section 3 we show that also in our case we can classify the representations as spherical, special, and super-cuspidal, and that for our purposes it suffices to consider spherical representations. In Section 4 we consider the case where $G$ acts transitively on the set of vertices of the tree and give an explicit description of spherical representations, getting a classification that depends on only one parameter. In Section 5 we study nondegenerate sesquilinear forms of finite index that can be preserved by these representations, completing the proof of our theorem in the case where the action of $G$ is transitive on the vertices. Finally, in Section 6 we generalize the results of Sections 4 and 5 to the case where the action of $G$ is not transitive on the vertices. The Appendix, while functional to our scope of restricting to algebraically irreducible representations, contains results about admissibility of representations and irreducible actions of operators algebras that can also carry some independent interest. \\

This paper is part of my Ph.D. project under the supervision of Prof. Nicolas Monod at EPFL (Lausanne, Switzerland). I am grateful to Nicolas for proposing this project, for giving me valuable advice and feedback, and for sharing with me deep insights of the theory. I am also grateful to Bruno Duchesne for his useful questions and remarks which helped me make this paper more clear. \\

\section{Preliminaries}

A tree $T$ is an undirected graph in which every two vertices are connected by exactly one path. We call the set of vertices $V$ and the set of (unoriented) edges $E$. A tree is \textit{regular} if each vertex has the same degree $d$, i.~e.~it is an endpoint of exactly $d$ edges. This number $d$ is called the degree of the tree. A tree is \textit{biregular} of degrees $(r,s)$ if all vertices at even distance from a base vertex have degree $r$, and all vertices at odd distance have degree $s$. \\

The boundary at infinity $\partial T$ of the tree $T$ is the set of infinite injective paths $(x_n)_{n\in\mathbb{N}}$ with $x_n\in V$ for $n\in\mathbb{N}$, where two paths $(x_n)_{n\in\mathbb{N}}$ and $(y_n)_{n\in\mathbb{N}}$ are identified if there exists $t\in\mathbb{Z}$ such that $x_n=y_{n+t}$ for all large enough $n$. \\

We consider a locally finite tree $T$ where every vertex has degree $\geq 3$, and its automorphism group $\mathrm{Aut}(T)$. This group, equipped with the topology of pointwise convergence, is totally disconnected and locally compact. It has a natural action on the tree and on its boundary at infinity. \\

Let $G<\mathrm{Aut}(T)$ be a subgroup. For each subtree $S\subseteq T$ (which might be a single vertex or an edge), let $G(S)$ be the subgroup of $G$ that fixes every point in $S$. \\

If $e\in E$ is an edge, it divides the tree into two subtrees whose intersection is just $e$ (together with its two endpoints). We call the two subtrees $T_e^{(1)}$ and $T_e^{(2)}$. It is immediate to see that every automorphism of $T$ that fixes every point in $T_e^{(1)}$ commutes with every automorphism of $T$ that fixes every point in $T_e^{(2)}$. \\

We can now introduce the independence property for a subgroup $G<\mathrm{Aut}(T)$. We use the definition given by Amann (\cite{amann}, Definition 9):

\begin{dfn}{(Independence property)} \label{indpro}
Let $G<\mathrm{Aut}(T)$ be a subgroup. We say that $G$ has the \textit{independence property} if, for every edge $e$, it holds $$G(e)=G(T_e^{(1)})G(T_e^{(2)}).$$
\end{dfn}

As observed by Amann (as a consequence of Lemma 10 in \cite{amann}), if $G$ is a closed subgroup of $\mathrm{Aut}(T)$, this property is equivalent to Tits' Property (P), which was defined in (\cite{tits}, 4.2). Therefore, from now on, we shall use Definition \ref{indpro} as an equivalent definition for Tits' Property (P). \\

In order to prove Theorem \ref{Main}, we are interested in studying representations of closed noncompact subgroups of $\mathrm{Aut}(T)$ that act transitively on the boundary $\partial T$ and have the independence property. From now on, let $G$ be such a subgroup. We recall a few properties of $G$ that follow from the assumptions; more details can be found in \cite{amann} (Proposition 4). \\

\begin{prp} \label{props}
The following hold:
\begin{enumerate}[label=(\alph*)]
    \item The identity in the group $G$ has a basis of open compact neighborhoods: the subgroups $(G(B_n(o)))_{n\in\mathbb{N}}$, where $o$ is a fixed vertex and $B_n(o)$ is the ball of radius $n$ around $o$.
    \item For every vertex $o$, the subgroup $G(o)$ acts transitively on $\partial T$.
    \item Either the action of $G$ on $V$ is transitive or it has two orbits. In the first case, the tree is regular; in the second case, the tree is regular or biregular, one orbit is the set of vertices at even distance from a fixed vertex $o$, and the other orbit is the set of vertices at odd distance from $o$. \\
\end{enumerate}
\end{prp}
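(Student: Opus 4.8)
The three parts are of increasing depth, and I would argue essentially along the lines of Amann (\cite{amann}, Proposition~4). \textbf{(a)} Since $\mathrm{Aut}(T)$ carries the topology of pointwise convergence on $V$, the pointwise stabilisers of finite vertex sets form a neighbourhood basis of the identity, and in particular so do the ball stabilisers $\mathrm{Aut}(T)(B_n(o))$. Each is open by definition of the topology, and compact because local finiteness forces an automorphism fixing $B_n(o)$ to be determined by a coherent choice, at every vertex at distance $m>n$, of one of finitely many images, so that the stabiliser embeds as a closed subset of a product of finite sets. As $G$ is closed in $\mathrm{Aut}(T)$, the groups $G(B_n(o))=G\cap\mathrm{Aut}(T)(B_n(o))$ are open in $G$, compact (closed subsets of a compact set), and still form a neighbourhood basis of $1$; in particular $G(o)=G(B_0(o))$ is compact open, a fact I use freely below.

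\textbf{(c)} Fix $o$ and split $V$ into the classes $V_0,V_1$ of vertices at even, respectively odd, distance from $o$; this is the canonical bipartition of $T$, and every automorphism either preserves it or interchanges the two classes, so $G$ has at most two orbits on $V$ provided it is transitive on $V_0$ and on $V_1$. For that transitivity I would first extract structure from the hypotheses: $G$ noncompact has no bounded orbit, hence no fixed finite subtree, and transitivity on $\partial T$ with $|\partial T|\ge 3$ rules out a fixed end and a fixed pair of ends, so by Tits' classification of actions on trees $G$ contains a hyperbolic element; moreover transitivity on $\partial T$ forces the action to be minimal --- a proper invariant subtree would give a bounded orbit, a fixed end or line, or a proper nonempty closed invariant subset of $\partial T$ --- hence the axes of the hyperbolic elements of $G$ cover $T$ and every edge lies on such an axis. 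Using these hyperbolic elements together with the independence property (in the same way as in the base case of (b) below) one transports any vertex to any other at even distance from it, which gives transitivity on $V_0$ and on $V_1$. Finally, if some element of $G$ swaps the two classes there is a single orbit and all vertices have the same degree, so $T$ is homogeneous; otherwise the orbits are exactly $V_0$ and $V_1$, the vertices of each class share a common degree, and $T$ is homogeneous or semi-homogeneous of some degrees $(r,s)$.

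\textbf{(b)} I would reduce this to a finite level. As $G(o)$ is compact and $\partial T$ is compact metrizable, the $G(o)$-orbits on $\partial T$ are closed; and if $G(o)$ acts transitively on every sphere $S_n(o)$ --- equivalently, $T$ being a tree, on the geodesic segments of length $n$ issuing from $o$ --- then for $\xi,\eta\in\partial T$ one picks $g_n\in G(o)$ matching the length-$n$ initial segments of $[o,\xi)$ and $[o,\eta)$ and passes to a convergent subsequence $g_n\to g\in G(o)$, which satisfies $g\xi=\eta$. So it suffices to prove sphere-transitivity of $G(o)$, by induction on $n$. In the inductive step, if $v$ is the neighbour of $o$ on a given segment and $e=\{o,v\}$, the identity $G(e)=G(T_e^{(1)})G(T_e^{(2)})$ shows that the factor fixing the half containing $o$ still realises, on the other half, every element of the local action at $v$ that fixes the direction back to $o$; combined with the inductive hypothesis this carries transitivity from segments of length $n$ to those of length $n+1$.

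The base case $n=1$ --- more precisely, the statement that the local action of $G$ at every vertex is $2$-transitive --- is where the three hypotheses must genuinely be combined, and I expect it to be the main obstacle. Transitivity of $G$ on $\partial T$ says nothing by itself about vertex stabilisers; the argument really uses the hyperbolic elements supplied by noncompactness, localised inside a vertex stabiliser through the independence property, while the abundance of ends coming from boundary transitivity is what leaves room to carry a neighbour of $o$ to any other and then decouple the two branches. This is exactly the point worked out in detail in \cite{amann}; once it is available, the inductive propagation over spheres, the passage to the boundary in (b), and the parity and degree bookkeeping in (c) are routine.
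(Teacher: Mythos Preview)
The paper does not actually prove this proposition: it simply states the three items and refers the reader to Amann (\cite{amann}, Proposition~4) for details. Your proposal is therefore already more explicit than the paper itself, and it is faithful to Amann's argument --- you correctly isolate the standard totally-disconnected-group reasoning for (a), the reduction of (b) to sphere-transitivity plus a compactness limit, the inductive propagation via the independence property, and the fact that the real content lies in establishing $2$-transitivity of the local action. Since both you and the paper ultimately defer to \cite{amann} at exactly that point, there is nothing to compare beyond noting that your sketch fills in the routine parts the paper omits.
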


When studying irreducible representations of $G$ into a Hilbert space $H$, irreducibility is intended in the topological sense: there are no closed $G$-invariant subspaces of $H$ except $\{0\}$ and $H$. We shall now reduce to a situation where we have irreducibility in the algebraic sense, i.~e~no $G$-invariant subspaces at all, except $\{0\}$ and the full space. \\

To this end, we restrict our attention to a dense subspace of $H$, as in the following proposition. \\

\begin{prp} \label{smooth}
Let $(\pi,H)$ be a continuous irreducible representation of $G$. Let $H^{(\infty)}\subseteq H$ be the subspace of smooth vectors in $H$, i.~e~vectors that are fixed by some open compact subgroup of $G$. Then $H^{(\infty)}$ is dense in $H$.
\end{prp}

\begin{proof}
Since the representation is irreducible and the subspace $H^{(\infty)}$ is $G$-invariant (if $g\in G$ and $K<G$ satisfies $Kv=v$, then $gv$ is fixed by the subgroup $gKg^{-1}$), it is enough to show that there exists one nonzero smooth vector. \\

Let $v\in H$, $v\neq 0$. By Hahn-Banach, there exists a continuous linear operator $L$ on $H$ such that $Lv\neq 0$. By the continuity of the representation, there is an open compact $K<G$ such that $\vert Lkv-Lv\vert<\frac{1}{2}\vert Lv\vert$ for all $k\in K$. We consider the vector $w:=\int_K kv\ dk$, where we integrate with respect to the Haar measure on $K$ (normalized so that the total measure of $K$ is $1$). This vector is fixed by $K$ and satisfies $\vert Lw-Lv\vert<\frac{1}{2}\vert Lv\vert$, which implies $\vert Lw\vert>\frac{1}{2}\vert Lv\vert$ and thus $w\neq 0$. \\
\end{proof}

We will prove in the Appendix that, under the hypotheses of Theorem \ref{Main} (namely the existence of a preserved sesquilinear form of finite index), the representation $(\pi,H)$ is \textit{admissible}, meaning that for every open compact $K<G$ the space $H^K$ of $K$-fixed vectors is finite-dimensional. This is not true in complete generality, as shown in an example by Nicolas Monod \cite{inadmissible}. We will also show in the Appendix that admissibility implies that the restriction of the representation to $H^{(\infty)}$ is irreducible in the algebraic sense; therefore from now on we shall assume that we are dealing with an irreducible representation in the algebraic sense. \\

\section{Restricting to spherical representations}

In this section, we begin the classification of the representations. We will work on a group $G$ that satisfies the hypotheses of Theorem \ref{Main}, and we will denote by $(\pi,H)$ a continuous algebraically irreducible representation of $G$ to a $\mathbb{K}$-vector space $H$ that is the subspace of smooth vectors of a $\mathbb{K}$-Hilbert space $\widetilde{H}$, with the topology inherited from $\widetilde{H}$. Here $\mathbb{K}\in\{\mathbb{R},\mathbb{C},\mathbb{H}\}$. In the case $\mathbb{K}=\mathbb{H}$, we will always intend $\mathbb{K}$-vector spaces as right vector spaces. We will also always assume all sesquilinear forms to be continuous. \\

\begin{dfn}{(Complete subtree)}
A finite subtree $S\subset T$ is \textit{complete} if every vertex in $S$ with at least two neighbors in $S$ is such that all its neighbors in $T$ are in $S$. \\
\end{dfn}

Given a continuous representation $(\pi,H)$ of $G$, we know from Proposition \ref{smooth} that there is some open compact $K<G$ such that $H^K\neq\{0\}$, where the notation indicates the subspace of $K$-fixed vectors in $H$. We can consider the set of complete subtrees $S$ such that $H^{G(S)}\neq\{0\}$. What we have said, together with Proposition \ref{props}(a), implies that this set is not empty. Thus, we can consider the set $M$ of minimal complete subtrees $S$ in the set and distinguish three cases:

\begin{itemize}
    \item $M$ contains a subtree that consists of only one vertex: we say that the representation $(\pi,H)$ is \textit{spherical}.
    \item $M$ contains a subtree that consists of two vertices (linked by an edge): we say that the representation $(\pi,H)$ is \textit{special}.
    \item $M$ contains a subtree with more than two vertices: we say that the representation $(\pi,H)$ is \textit{super-cuspidal}.
\end{itemize}

Since the set $M$ is $G$-invariant, it is clear that the representation $(\pi,H)$ belongs to exactly one of the three cases (it follows from (b) and (c) of Proposition \ref{props}). \\

We show that every irreducible continuous representation of $G$ is a subrepresentation of a more concrete representation. This will allow us to restrict our analysis to spherical representations and to express them in a more convenient way. \\

\begin{prp} \label{subr}
Let $S$ be a complete finite subtree of $T$. Let $C(S)$ be the space of $\mathbb{K}$-valued functions on $G$ that are right-invariant under $G(S)$ and left-invariant under some open compact subgroup of $G$ (which may depend on the function). Consider the representation $(\Pi_S,C(S))$ of $G$ defined by the natural left action of $G$ on $C(S)$. Let $(\pi,H)$ be a continuous irreducible representation of $G$ such that $H^{G(S)}\neq\{0\}$. Then $(\pi,H)$ is a subrepresentation of $(\Pi_S,C(S))$.
\end{prp}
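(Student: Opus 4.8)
The plan is to construct an explicit nonzero intertwining map from $(\pi,H)$ into $(\Pi_S,C(S))$ and then invoke irreducibility. First I would pick a nonzero vector $v\in H^{G(S)}$, which exists by hypothesis. Using Hahn–Banach (exactly as in the proof of Proposition \ref{smooth}), choose a continuous linear functional $L$ on $H$ with $Lv\neq 0$. Now define a map $\Phi\colon H\to\mathbb{K}^G$ by $\Phi(w)(g)=L\bigl(\pi(g^{-1})w\bigr)$ for $g\in G$, $w\in H$. This is the standard matrix-coefficient construction, and one checks it is $\mathbb{K}$-linear and intertwines the left translation action: $\Phi(\pi(h)w)(g)=L(\pi(g^{-1}h)w)=\Phi(w)(h^{-1}g)=(\Pi_S(h)\Phi(w))(g)$.

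The second step is to verify that $\Phi$ actually lands in the subspace $C(S)\subseteq\mathbb{K}^G$, i.e. that each function $\Phi(w)$ is right-invariant under $G(S)$ and left-invariant under some open compact subgroup. Right $G(S)$-invariance is where the choice of $v$ enters: for $k\in G(S)$ we have $\Phi(w)(gk)=L(\pi(k^{-1}g^{-1})w)$, which is not obviously $\Phi(w)(g)$. The fix is to average $L$ against $G(S)$ — but $G(S)$ need not be compact. Instead I would use $v$ on the other side: replace the naive matrix coefficient by $\Phi(w)(g)=L(\pi(g^{-1})w)$ and note that it is the \emph{vector} $v$, not the functional, that is $G(S)$-fixed, so one should build the map the other way. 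Concretely, by Proposition \ref{smooth} $w$ is smooth, fixed by some open compact $K_w$; then $\Phi(w)$ is right-invariant under $K_w$... — this gives left/right invariance on the wrong side. The clean approach: define $\Phi(w)(g)=\langle$ suitable pairing $\rangle$ so that the $G(S)$-fixed vector $v$ sits on the side acted on by right translation. Since we have no inner product available, use instead the functional: set $L$ to be $G(S)$-invariant by choosing it inside $(H^*)^{G(S)}$ — such an $L$ with $Lv\neq 0$ exists because $v\in H^{G(S)}$ and the $G(S)$-invariant functionals separate $G(S)$-invariant vectors (Hahn–Banach applied to the quotient $H\to H/\overline{\langle \pi(k)w-w\rangle}$, or directly: any extension of the functional $v\mapsto Lv$ on the invariants). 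With $L$ now $G(S)$-invariant, $\Phi(w)(gk)=L(\pi(k)^{-1}\pi(g)^{-1}w)=L(\pi(g)^{-1}w)=\Phi(w)(g)$, giving right $G(S)$-invariance; and left $K_w$-invariance follows since $\Phi(\pi(h)w)=\Pi_S(h)\Phi(w)$ and $w$ is $K_w$-fixed.

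Finally, $\Phi$ is nonzero since $\Phi(v)(e)=Lv\neq 0$, so $\ker\Phi$ is a proper $G$-invariant subspace of $H$, hence $\ker\Phi=\{0\}$ by irreducibility; thus $\Phi$ is an injective intertwiner and $(\pi,H)\cong\Phi(H)\subseteq C(S)$ is a subrepresentation of $(\Pi_S,C(S))$.

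The main obstacle is the one flagged above: producing a $G(S)$-invariant continuous linear functional $L$ with $Lv\neq 0$ despite $G(S)$ being noncompact so that no averaging argument is directly available. I expect this to be handled by a Hahn–Banach argument — extend the evaluation functional defined on the closure of $\{\pi(k)v - v : k\in G(S)\}$'s complement, or equivalently apply Hahn–Banach on the locally convex quotient to separate $v$ from $\overline{\operatorname{span}}\{\pi(k)u-u\}$ — but one must be slightly careful about which invariance (left vs. right, of functional vs. vector) is actually needed and that continuity is preserved under the construction.
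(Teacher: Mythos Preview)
Your approach---build a matrix-coefficient map $w\mapsto(g\mapsto L(\pi(g^{-1})w))$ using a $G(S)$-invariant linear functional $L$ that does not kill the fixed vector $v$---is exactly the paper's approach, phrased there in terms of the contragredient: one picks a nonzero $\xi\in\widetilde H^{G(S)}$ and sends $v\mapsto(g\mapsto\langle\xi,\pi(g^{-1})v\rangle)$. The equivariance, right-$G(S)$-invariance, and left smoothness checks are identical to yours.

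The one genuine slip is your claim that ``$G(S)$ need not be compact.'' Since $S$ is a \emph{finite} subtree, $G(S)$ is contained in the vertex stabilizer $G(v)$ for any $v\in S$, which is compact, and it contains $G(B_n(o))$ for $n$ large enough that $S\subseteq B_n(o)$, which is open; so $G(S)$ is compact open. This dissolves your ``main obstacle'': you may simply take any continuous $L$ with $Lv\neq 0$ (Hahn--Banach) and average it over $G(S)$; since $v$ is $G(S)$-fixed, the averaged functional still satisfies $Lv\neq 0$. The paper obtains the same conclusion by citing the existence of a $G(S)$-invariant complement to $H^{G(S)}$ (a standard compact-group fact), which yields $\widetilde H^{G(S)}\neq\{0\}$.

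Your proposed workaround via Hahn--Banach on the quotient $H/\overline{\mathrm{span}}\{\pi(k)u-u\}$ has its own gap: you would need $v\notin\overline{\mathrm{span}}\{\pi(k)u-u\}$, and being a $G(S)$-invariant vector does not by itself guarantee this---for genuinely noncompact groups invariants and coinvariants can behave very differently. So that route would ultimately also need compactness of $G(S)$, at which point direct averaging is simpler.
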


\begin{proof}
Consider the contragredient representation $(\widetilde{\pi},\widetilde{H})$ of $(\pi,H)$. Since $H^{G(S)}\neq\{0\}$, we have $\widetilde{H}^{G(S)}\neq\{0\}$: the space $H^{G(S)}$ has a $G(S)$-invariant complement in $H$ and therefore is isomorphic to $\widetilde{H}^{G(S)}$ (\cite{bbej}, Proposition 2.7). \\

Consider a nonzero $\xi\in\widetilde H^{G(S)}$. The map $$v\rightarrow\left(g\mapsto\left\langle \xi,\pi(g^{-1})v\right\rangle\right)$$ is a well-defined map from $H$ to $C(S)$: the function $g\mapsto\left\langle \xi,\pi(g^{-1})v\right\rangle$ is right-invariant under $G(S)$ because $\xi$ is invariant under $G(S)$, and left-invariant under some compact open subgroup of $G$ because every $v\in H$ is smooth. It is immediate to see that the map is $G$-equivariant and nonzero. Therefore, since the representation $(\pi,H)$ is irreducible, the map is injective and $(\pi,H)$ is a subrepresentation of $(\Pi_S,C(S))$. \\
\end{proof}

Amann proves (\cite{amann}, Lemma 20) that if the representation $(\pi,H)$ is super-cuspidal, the space $H\subset C(S)$ that arises by applying Proposition \ref{subr} to $S\in M$ consists of functions that are finitely supported when projected to $G/S$. This is shown for complex representations, but the proof is the same for real and quaternionic ones. Amann then proceeds to classify unitary super-cuspidal representations, but the proof that the functions are finitely supported on $G/S$ works without assuming that the representation is unitary. This follows as a consequence, as the representation will preserve the scalar product on $H$ defined as the restriction to $H$ of the standard scalar product on the space of finitely supported functions on $G/S$. \\

Similarly, Amann proves (\cite{amann}, Lemma 34) that if the representation $(\pi,H)$ is special, all functions in $H\subset C(S)$ (with $S$ consisting of two vertices linked by an edge) belong to $\ell^2(G/S)$. Again, the proof does not need that the representation is unitary, and this fact follows as a consequence, as the restriction to $H$ of the standard scalar product on $\ell^2(G/S)$ will be preserved. \\

In the rest of the paper, we will focus our attention on the remaining case, the case of spherical representations. Since in the other two cases all irreducible representations are unitary and since they satisfy $\dim(H^{G(S)})<\infty$ (\cite{amann}, Lemma 20, Lemma 34), they cannot preserve a nondegenerate sesquilinear form of index $\geq 1$. This is explained in the following proposition. \\

\begin{prp} \label{uniqueform}
Let $(\pi,H)$ be an irreducible $\mathbb{K}$-representation of $G$. Assume there exists an open compact subgroup $K$ such that $0<\dim(H^K)<\infty$. If $\pi$ preserves a nondegenerate sesquilinear form $\langle\cdot,\cdot\rangle$ of index $p\geq 0$, with $2p\neq\dim(H)$, then any other sesquilinear form $\langle\langle\cdot,\cdot\rangle\rangle$ that it preserves is a real scalar multiple of $\langle\cdot,\cdot\rangle$.
\end{prp}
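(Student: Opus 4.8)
The plan is to exploit the finite-dimensionality of $H^K$ together with a Schur-type argument. First I would use the hypothesis $0<\dim(H^K)<\infty$ to reduce the comparison of the two forms to a linear-algebra problem on the finite-dimensional space $H^K$: averaging over $K$ (with respect to normalized Haar measure) shows that both $\langle\cdot,\cdot\rangle$ and $\langle\langle\cdot,\cdot\rangle\rangle$ restrict to nondegenerate sesquilinear forms on $H^K$, because the $K$-invariant complement of $H^K$ in $H$ (which exists by \cite{bbej}, Proposition 2.7, applied as in Proposition \ref{subr}) is orthogonal to $H^K$ for any $K$-invariant form. Nondegeneracy on $H^K$ follows from nondegeneracy on $H$: if $v\in H^K$ pairs trivially with all of $H^K$, then by the orthogonal decomposition it pairs trivially with all of $H$.

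Next I would produce an operator that intertwines the two forms. On the finite-dimensional space $H^K$ one can write $\langle\langle u,v\rangle\rangle=\langle Au,v\rangle$ for a unique $\mathbb{K}$-linear operator $A$ on $H^K$ (using nondegeneracy of $\langle\cdot,\cdot\rangle$ on $H^K$). The key point is that $A$ commutes with the action of the Hecke-type algebra generated by the averaging operators $e_K\pi(g)e_K$ (where $e_K$ is the projection onto $H^K$), since both forms are $G$-invariant and both restrict nondegenerately; a standard computation shows $\langle e_K\pi(g)e_K u, v\rangle$ and $\langle\langle e_K\pi(g)e_K u,v\rangle\rangle$ are governed by the same adjointness relation, forcing $A$ to be in the commutant. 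Then I would invoke irreducibility of $(\pi,H)$, which by a Jacquet-type argument implies that $H^K$ is an irreducible module over this Hecke algebra, so by Schur's lemma over $\mathbb{K}$ the operator $A$ is a scalar in $\mathbb{K}$ (or in the relevant division algebra). The hermitian-symmetry of both forms then forces this scalar to be real (for $\mathbb{C}$ and $\mathbb{H}$, a $\mathbb{K}$-linear operator implementing a change between two hermitian forms and commuting with everything must be self-adjoint, hence real, once one checks $A^*=A$ from $\langle\langle u,v\rangle\rangle=\overline{\langle\langle v,u\rangle\rangle}$).

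The condition $2p\neq\dim(H)$ enters to rule out the degenerate situation where $A$ could be non-real: if $\dim H$ were finite and equal to $2p$, the form could be "neutral" and a complex scalar multiple would still be a valid invariant form with the same index, so one genuinely needs this hypothesis to conclude the multiple is real; I would spell out where in the self-adjointness argument this is used (essentially, $A$ and $\bar\lambda A$ give forms of complementary index, and equality of indices with $2p\neq\dim H$ forces $\lambda=\bar\lambda$).

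The main obstacle I anticipate is the Schur's lemma step: over $\mathbb{R}$ and $\mathbb{H}$ the endomorphism algebra of an irreducible module need not be just the base field, so I would need to argue carefully that the commutant of the Hecke algebra acting on $H^K$ is exactly $\mathbb{K}$ (or handle the division-algebra case and then cut it down using the hermitian symmetry). A secondary technical point is justifying that $H^K$ is genuinely irreducible as a Hecke-algebra module given only algebraic irreducibility of $(\pi,H)$ — this is the usual equivalence of categories between smooth representations with nonzero $K$-fixed vectors and nondegenerate Hecke modules, and I would cite or reprove the relevant direction (nonzero proper Hecke-submodule of $H^K$ generates a proper $G$-subrepresentation). \\
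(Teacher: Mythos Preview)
Your route is genuinely different from the paper's. The paper never restricts to $H^K$ or invokes the Hecke algebra; it writes the second form globally as $\langle\langle\cdot,\cdot\rangle\rangle=\langle A\cdot,\cdot\rangle$ for a self-adjoint $A:H\to H$, checks directly that $A\in\mathrm{End}(\pi)$, and then uses $H^K$ only once: the injection $\mathrm{End}(\pi)\hookrightarrow H^K$, $\phi\mapsto\phi(v)$, bounds $\dim\mathrm{End}(\pi)\leq\dim H^K<\infty$, so $\mathrm{End}(\pi)\cong\mathbb{R},\mathbb{C}$ or $\mathbb{H}$. If the algebra is not $\mathbb{R}$, one extracts a self-adjoint $A_1\in\mathrm{End}(\pi)$ with $A_1^2=-\mathrm{Id}$, and then $\langle A_1v,A_1v\rangle=-\langle v,v\rangle$ shows that $A_1$ bijects positive and negative vectors of $H$, forcing $2p=\dim H$. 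This is where $2p\neq\dim H$ is used, and it is used \emph{globally}.

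Your plan has a real gap precisely at this point. Because your operator $A$ lives only on $H^K$, the analogous square-root argument would at best tell you that the \emph{restricted} form on $H^K$ has index $\tfrac{1}{2}\dim H^K$, which does not contradict $2p\neq\dim H$. Your sketch of how $2p\neq\dim H$ enters (``$A$ and $\bar\lambda A$ give forms of complementary index'') is not a workable mechanism: the scalar you produce lies in the commutant of the Hecke algebra on $H^K$, not in $\mathbb{K}$, and comparing indices of forms on $H^K$ says nothing about $p$ and $\dim H$. To salvage your approach you would have to promote the Hecke-intertwiner on $H^K$ to an element of $\mathrm{End}(\pi)$ on all of $H$ (via the equivalence between smooth $G$-representations generated by $K$-fixed vectors and Hecke modules), and then check that the extension is still self-adjoint for $\langle\cdot,\cdot\rangle$; only then can you run the positive/negative-vector argument on $H$. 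None of this is in the proposal, and once you add it you have essentially reconstructed the paper's global argument by a longer path. A second, smaller omission: even after you get the two forms to agree up to a real scalar on $H^K$, you still need a sentence explaining why this propagates to all of $H$ (the radical of the difference is $G$-invariant and contains $H^K\neq 0$, hence equals $H$).
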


\begin{proof}
Any sesquilinear form $\langle\langle\cdot,\cdot\rangle\rangle$ on $H$ can be written as $\langle\langle\cdot,\cdot\rangle\rangle=\langle A\cdot,\cdot\rangle$, where $A: H\rightarrow H$ is a self-adjoint linear operator. If the form $\langle\langle\cdot,\cdot\rangle\rangle$ is preserved by the representation $\pi$, we have $$\langle A\pi(g)v,w\rangle=\langle\langle\pi(g)v,w\rangle\rangle=\langle\langle v,\pi(g^{-1})w\rangle\rangle=\langle Av,\pi(g^{-1})w\rangle=\langle \pi(g)Av,w\rangle$$ for all $g\in G,\ v,w\in H$. This means that $A$ commutes with the representation, i.~e.~$A\in\mathrm{End}(\pi)$. By the Schur Lemma (\cite{bbej}, Lemma 3.21), $\mathrm{End}(\pi)$ is a division algebra over $\mathbb{R}$. If we fix a nonzero $v\in H^K$ the map $\mathrm{End}(\pi)\rightarrow H^K$ defined by $\phi\mapsto\phi(v)$ is injective, since $\pi$ is irreducible, thus $\dim(\mathrm{End}(\pi))\leq\dim(H^K)$. Since $\dim(H^K)$ is finite, so is $\dim(\mathrm{End}(\pi))$, and it is well known that then $\mathrm{End}(\pi)$ must be isomorphic to $\mathbb{R}$, $\mathbb{C}$ or $\mathbb{H}$. \\

If $\mathrm{End}(\pi)$ is isomorphic to $\mathbb{R}$, then we are done. In the other two cases, we have $A=\alpha Id+\beta A_1$ with $\alpha,\beta\in\mathbb{R}$ and $A_1^2=-Id$. We wish to prove that $\beta=0$. Since $A$ and $\alpha Id$ are self-adjoint and belong to $\mathrm{End}(\pi)$, the same must be true for $\beta A_1$ and therefore, if $\beta\neq 0$, also for $A_1$. Then for every $v\in H$ we have $$\langle A_1 v,A_1 v\rangle=\langle A_1^2 v,v\rangle=-\langle v,v\rangle.$$

Therefore, $A_1$ sends positive vectors (with respect to the form $\langle\cdot,\cdot\rangle$) to negative vectors and vice versa. As $A_1$ is invertible (because $A_1^2=-Id$), this means that it establishes a bijection between positive and negative vectors of $H$, which can only happen if the index $p$ of the form $\langle\cdot,\cdot\rangle$ satisfies $2p=\dim(H)$. \\
\end{proof}

\section{An explicit form for spherical representations}

If the representation $(\pi,H)$ is spherical, we can apply Proposition \ref{subr} with $S$ being a single vertex. We call this vertex $o$, and we set $K:=G(o)$. \\

The space $C(S)$ is the space of $K$-right-invariant functions on $G$ that are left-invariant under some open compact subgroup. We can identify this space with the space of functions on $G/K$ (the set of right cosets of $G$ under $K$) that are left-invariant under some compact open subgroup of $G$. The set $G/K$ can be identified with the orbit of $o$ in $T$ under the action of $G$, which is either the full vertex set $V$ or the set of vertices at even distance from $o$ (Proposition \ref{props}(c)). \\

In this section, we assume that we have the full vertex set $V$, i.~e.~that the action of $G$ is transitive on the vertices. In this case, the tree must be regular. In Section 6 we will extend the results to the other case. \\

Proposition \ref{subr} tells us that we can see the representation $(\pi,H)$ as a subrepresentation of $(\Pi,C)$, where $C$ is the space of $\mathbb{K}$-valued functions on $V$ that are left-invariant under some open compact subgroup of $G$ and $\Pi$ is the left action of $G$ on $C$. Therefore, from now on, we assume that $H\subset C$ and $\pi$ is the restriction of $\Pi$ to $H$. \\

It is known that $(G,K)$ is a Gel'fand pair, which means that the convolution algebra of compactly supported complex functions on $K\backslash G/K$ is commutative. This follows (\cite{amann}, Proposition 7) from the stronger fact that for every $g\in G$ we have $g^{-1}\in KgK$: the set $KgK$ is the set of $h\in G$ such that $d(ho,o)=d(go,o)$, and we have $d(g^{-1}o,o)=d(o,go)$. \\

\begin{prp} \label{gelfand}
For any irreducible representation $(\pi,H)$ of $G$ that preserves a sesquilinear form of finite index $q$, the space $H^K$ of $K$-fixed vectors is at most $1$-dimensional.
\end{prp}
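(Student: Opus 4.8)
The plan is to study $H^K$ as a module over the Hecke algebra $\mathcal{H}:=\mathcal{H}(G,K)$ of compactly supported $K$-bi-invariant $\mathbb{K}$-valued functions on $G$, with $\phi\in\mathcal{H}$ acting by $\pi(\phi):=\int_G\phi(g)\pi(g)\,dg$, and to exploit two things about $\mathcal{H}$: that it is commutative (for $\mathbb{K}=\mathbb{R},\mathbb{C}$), resp.\ commutative up to the central copy of $\mathbb{H}$ (for $\mathbb{K}=\mathbb{H}$), by the Gel'fand pair property recalled above; and that on a tree it has a very explicit shape. First I would show that $H^K$ is a \emph{simple} $\mathcal{H}$-module. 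By Proposition \ref{smooth} the representation is smooth, so any $0\neq v\in H^K$ is fixed by $\pi(e_{K'})$ for some open compact $K'$, where $e_{K'}$ is normalized Haar measure on $K'$, viewed inside the full convolution algebra $\mathcal{H}_c(G)$ of compactly supported locally constant functions. Since $\mathrm{span}\{\pi(g)v:g\in G\}$, hence also $\pi(\mathcal{H}_c(G))v$, is a nonzero $G$-invariant subspace and $\pi$ is algebraically irreducible, $\pi(\mathcal{H}_c(G))v=H$; applying the idempotent $e_K$ and using $v=e_Kv$ gives $H^K=e_KH=\pi(e_K*\mathcal{H}_c(G)*e_K)v=\pi(\mathcal{H})v$ for \emph{every} nonzero $v\in H^K$, so $H^K$ has no nonzero proper $\mathcal{H}$-submodule. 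I would also record that $B|_{H^K}$ is nondegenerate: if $u\in H^K$ is $B$-orthogonal to $H^K$, then for every $w\in H$ one has $B(u,w)=B\bigl(u,\int_K\pi(k)w\,dk\bigr)=0$ by $K$-invariance of $u$, so $u=0$ by nondegeneracy of $B$ on $H$.

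Next I would bring in the tree. On the homogeneous tree of this section, $\mathcal{H}$ is generated over $\mathbb{K}$ by the single element $z:=\pi(\chi_1)$, where $\chi_1$ is the normalized characteristic function of $\{g\in G:d(go,o)=1\}$, because the characteristic functions of the distance-$n$ double cosets obey a two-term recursion under convolution with $\chi_1$; thus $\mathcal{H}\cong\mathbb{K}[z]$, and $\cong\mathbb{H}[z]$ with $z$ central when $\mathbb{K}=\mathbb{H}$. A simple module over $\mathbb{C}[z]$ is $\mathbb{C}[z]/(z-\mu)\cong\mathbb{C}$, one-dimensional. For $\mathbb{H}[z]$ with $z$ central, the maximal two-sided ideals come from those of $\mathbb{R}[z]$, and the quotients are $\mathbb{H}$ or $\mathbb{C}\otimes_{\mathbb{R}}\mathbb{H}\cong M_2(\mathbb{C})$, each admitting a unique simple module, which is one-dimensional over $\mathbb{H}$. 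This settles $\mathbb{K}\in\{\mathbb{C},\mathbb{H}\}$ without using $B$ at all, and reduces the statement to: over $\mathbb{R}$, the case where $H^K$ is a simple $\mathbb{R}[z]$-module with irreducible quadratic minimal polynomial, i.e.\ $\dim_{\mathbb{R}}H^K=2$ and $z|_{H^K}$ with non-real spectrum $\mu,\bar\mu$, cannot occur.

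This last point is where the finite-index form is genuinely needed, and I expect it to be the main obstacle. The key observation is that $z=\pi(\chi_1)$ is $B$-self-adjoint, because $\chi_1$ is real-valued and $\chi_1(g^{-1})=\chi_1(g)$ (using $g^{-1}\in KgK$); since $B|_{H^K}$ is nondegenerate of finite index, non-real spectrum of $z|_{H^K}$ already forces $B|_{H^K}$ to have index exactly $1$ (for index $0$ or $2$ the form is definite, so a self-adjoint operator is diagonalizable over $\mathbb{R}$). To exclude the remaining sub-case I would complexify: $z$ then has two distinct eigenvalues on $H^K\otimes\mathbb{C}=(H\otimes\mathbb{C})^K$, so that module is not simple over $\mathcal{H}\otimes\mathbb{C}$, hence (by the simplicity argument above applied over $\mathbb{C}$, contrapositively) $H\otimes\mathbb{C}$ is reducible; together with $\dim_{\mathbb{R}}\mathrm{End}(\pi)\leq\dim_{\mathbb{R}}H^K=2$ this forces $\mathrm{End}(\pi)\cong\mathbb{C}$. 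Then, exactly as in the proof of Proposition \ref{uniqueform}, there is $J\in\mathrm{End}(\pi)$ with $J^2=-\mathrm{Id}$ which, after subtracting a real scalar and rescaling, may be taken to be $B$-self-adjoint or $B$-skew-adjoint. In the self-adjoint case $B(Jv,Jv)=-B(v,v)$, so if $W$ is a maximal negative-definite subspace (of dimension the index $q$) then $B$ is positive definite on $W^{\perp}$ while negative definite on $JW^{\perp}$, whence $\dim_{\mathbb{R}}H\leq 2q<\infty$, incompatible with the infinite-dimensionality of $H\subseteq C$; in the skew-adjoint case $(H,J)$ is a $\mathbb{C}$-irreducible representation carrying a $G$-invariant nondegenerate hermitian form, which is ruled out because a spherical representation is self-contragredient, so such a form forces the parameter $\mu$ to be real. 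The delicate spots I anticipate are: justifying the convolution recursion and the identification $\mathcal{H}\cong\mathbb{K}[z]$ cleanly; making the complexification and the reduction to $\mathrm{End}(\pi)\cong\mathbb{C}$ airtight in the possibly infinite-dimensional setting; and, most of all, the concluding real-case dichotomy just sketched.
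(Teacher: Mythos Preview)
Your approach differs substantially from the paper's. You exploit the explicit description of the spherical Hecke algebra on a homogeneous tree as a polynomial ring in one generator $z$ and classify its simple modules directly; for $\mathbb{K}=\mathbb{C}$ this is clean and does not even use the form $B$. The paper instead treats the Hecke algebra as an abstract commutative $*$-algebra and invokes a theorem of Naimark (a commutative symmetric operator algebra on a space carrying a nondegenerate form of finite index preserves a finite-dimensional non-negative subspace) to obtain an $\mathcal{A}$-invariant \emph{positive definite} form on $H^K$; against that form, any self-adjoint $A\in\mathcal{A}$ with $A^2=-\mathrm{Id}$ is immediately impossible, which disposes of the real case in one stroke with no appeal to the global size of $H$.

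There are two concrete problems with your version. The minor one: for $\mathbb{K}=\mathbb{H}$ your $\mathbb{H}$-valued Hecke algebra does not act by $\mathbb{H}$-linear maps on the right $\mathbb{H}$-module $H$, since neither left nor right multiplication by $\phi(g)\in\mathbb{H}$ commutes with the right $\mathbb{H}$-scalars; you should use real- or complex-valued bi-$K$-invariant functions, as the paper does, after which your module argument goes through. The serious one is the real self-adjoint subcase. You conclude $\dim_{\mathbb{R}}H\le 2q<\infty$ and declare this incompatible with the infinite-dimensionality of $H\subseteq C$, but that infinite-dimensionality is nowhere available at this point: the identification $H=\{h\in C:Lh=\alpha h\}$ is Theorem~\ref{sph}, which comes \emph{after} the present proposition and uses it (through $\dim H^K=1$), so the appeal is circular. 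One can rule out finite-dimensional spherical irreducibles of dimension $>1$ by combining the independence property with Tits' simplicity of the subgroup generated by edge stabilizers (any open normal subgroup of $G$ then has abelian quotient), but this is a nontrivial extra ingredient you have not supplied. Your skew-adjoint subcase, by contrast, does close, though not by the self-contragredience remark you give: since $J|_{H^K}$ and $A:=(z-\mathrm{Re}\,\mu)/|\mathrm{Im}\,\mu|$ both square to $-\mathrm{Id}$ and commute on the two-dimensional real space $H^K$, one has $J|_{H^K}=\pm A$; but then $J^*=-J$ together with $A^*=A$ forces $J|_{H^K}=-J|_{H^K}$, hence $J|_{H^K}=0$, a contradiction.
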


\begin{proof}
We consider the convolution algebra $\mathcal{A}$ of compactly supported functions on $K\backslash G/K$ (i.~e.~compactly supported functions on $G$ which are invariant under $K$ on the left and on the right). We consider complex functions if $\mathbb{K}=\mathbb{C}$ and real functions if $\mathbb{K}=\mathbb{R}$ or $\mathbb{K}=\mathbb{H}$. \\

To prove our proposition, we follow the proof of Proposition 5.4 in \cite{monodpy}. As in that proof, we observe that this algebra is commutative and symmetric, the latter meaning that the adjoint of every element of $\mathcal{A}$ (with respect to the sesquilinear form preserved by $\pi$) is again in $\mathcal{A}$. Since in our case $g^{-1}\in KgK$ for every $g\in G$, a stronger statement holds: every element of $\mathcal{A}$ is self-adjoint. \\

As in \cite{monodpy} we see that $\mathcal{A}$ acts irreducibly on $H^K$ (which we assume to be non-trivial), we see that the restriction of the sesquilinear form on $H$ to $H^K$ is non-degenerate (say of index $q'\geq 0$), and we use the fact, due to Naimark (\cite{naimark1}, Corollary 2) that a commutative symmetric algebra of operators of a complex Hilbert space endowed with a nondegenerate sesquilinear form of finite index $q'\geq 1$ preserves a non-negative subspace $L$ of finite dimension. This is also true in the case $q'=0$, due to the Schur Lemma, as explained in the addendum to \cite{monodpy}. \\

In the case $\mathbb{K}=\mathbb{C}$, the result in \cite{naimark1} applied to the representation of $\mathcal{A}$ on $H^K$ ensures that $\mathcal{A}$ preserves a finite-dimensional subspace $L$ of $H^K$, and as the representation is irreducible it must be $L=H^K$. Since $\mathcal{A}$ is commutative, every element of $\mathcal{A}$ is an endomorphism of the representation, so the Schur Lemma implies that $\mathcal{A}\cong\mathbb{C}$ and therefore $H^K$ is $1$-dimensional. \\

In the case $\mathbb{K}=\mathbb{R}$, we apply the result in \cite{naimark1} to the complexification of the representation of $\mathcal{A}$ on $H^K$ (defined as $H^K\otimes_\mathbb{R} \mathbb{C}$). We find that there exists a finite-dimensional complex vector space $L$ preserved under $\mathcal{A}\otimes\mathbb{C}$ and thus under $\mathcal{A}$. This implies that there exists a finite-dimensional real vector space $L'\subseteq H^K$ preserved under $\mathcal{A}$, which must coincide with $H^K$ by irreducibility. The representation preserves a positive definite bilinear form on $L'=H^K$, since the restriction of the sesquilinear form to $L'$ is non-negative and non-degenerate. If we apply the Schur Lemma, we see that $\mathcal{A}$ is a finite-dimensional commutative division algebra over $\mathbb{R}$ and therefore it is isomorphic to $\mathbb{R}$ or $\mathbb{C}$. In the latter case, there exists $A\in\mathcal{A}$ such that $A^2=-Id$, which implies, since each $A\in\mathcal{A}$ is self-adjoint, that for every $v\in H^K$ it holds $$\langle Av,Av\rangle=\langle A^2 v,v\rangle=-\langle v,v\rangle,$$ contradicting the fact that the form is positive definite. Therefore, it must be $\mathcal{A}\cong\mathbb{R}$, which implies that $H^K$ is $1$-dimensional. \\

In the case $\mathbb{K}=\mathbb{H}$, we apply the result in \cite{naimark1} to the complex representation of $\mathcal{A}$ on $H^K$ seen as a complex vector space and find that it preserves a finite-dimensional complex subspace $L$. The quaternionic vector space that it generates is finite-dimensional, it must coincide with $H^K$ by irreducibility, and the sesquilinear form on it is positive definite. Applying the Schur Lemma, we again see that $\mathcal{A}\cong\mathbb{R}$ or $\mathcal{A}\cong\mathbb{C}$. The case $\mathcal{A}\cong\mathbb{C}$ can be excluded with the same argument as before, and we can conclude that $H^K$ is $1$-dimensional. \\
\end{proof}

Proposition \ref{gelfand} implies that there exists a nonzero function $f\in H$ such that the space of $K$-invariant functions in $H$ is the space of scalar multiples of $f$. \\

\begin{obs}
A function in $C$ is $K$-invariant if and only if it is \textit{radial} with respect to the base vertex $o$, i.~e.~its value at a vertex $v$ only depends on the distance between $v$ and $o$. This follows from the fact that $K$ acts transitively on $\partial T$ (Proposition 2.2(b)). Therefore, we can use the notation $f(n)$ ($n\in\mathbb{N}$) to indicate the value of $f$ at any vertex at distance $n$ from $o$. \\
\end{obs}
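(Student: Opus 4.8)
The plan is to prove the two implications separately, the forward one being immediate and the reverse one reducing to transitivity of $K$ on distance spheres.

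For the easy direction, suppose $f$ is radial, i.e. $f(v)$ depends only on $d(v,o)$. Every $k\in K=G(o)$ fixes $o$ and is an isometry of $T$, so $d(kv,o)=d(kv,ko)=d(v,o)$ for each vertex $v$; hence $f(kv)=f(v)$, which (under the identification of $C$ with functions on $V\cong G/K$, where the $K$-action is the geometric one on vertices) says exactly that $f$ is $K$-invariant. For the converse, the key claim is that $K$ acts transitively on each sphere $S_n(o)=\{v\in V:\ d(v,o)=n\}$. Granting this, if $f\in C$ is $K$-invariant and $u,v\in S_n(o)$, choose $k\in K$ with $ku=v$; then $f(v)=f(ku)=f(u)$, so $f$ is constant on $S_n(o)$ for every $n$, i.e. radial.

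It remains to establish transitivity of $K$ on $S_n(o)$. Given $u,v\in S_n(o)$, the geodesic segments $[o,u]$ and $[o,v]$ extend to geodesic rays $\rho_u,\rho_v$ issuing from $o$ (possible since every vertex has degree $\geq 3$), and these rays determine boundary points $\xi,\eta\in\partial T$. By Proposition \ref{props}(b), the subgroup $K=G(o)$ acts transitively on $\partial T$, so there is $k\in K$ with $k\xi=\eta$. Since $k$ fixes $o$ and maps geodesic rays to geodesic rays, it carries the unique ray from $o$ representing $\xi$ onto the unique ray from $o$ representing $\eta$; that is, $k\rho_u=\rho_v$. Reading off the $n$-th vertices of these rays yields $ku=v$, as desired.

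I do not expect a serious obstacle here: the only point requiring a little care is the observation that from a fixed base vertex there is a \emph{unique} geodesic ray in each boundary direction, so that an automorphism fixing $o$ and sending $\xi$ to $\eta$ is forced to send the specific ray $\rho_u$ to $\rho_v$ (rather than merely to some ray asymptotic to it). One should also double-check that the identification of $C$ with functions on $V$ and of the $K$-action on $C$ with the geometric $G(o)$-action on vertices is consistent with the conventions fixed earlier in this section, but this is routine.
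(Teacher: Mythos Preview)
Your argument is correct and is precisely the intended one: the paper's proof consists only of the sentence ``This follows from the fact that $K$ acts transitively on $\partial T$,'' and you have simply unpacked that remark by extending geodesic segments to rays and using the uniqueness of the ray from $o$ in a given boundary direction. There is nothing to add.
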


\begin{dfn}{(Laplacian operator)} \label{lapl}
The \textit{Laplacian operator} on the tree $T$ is the operator $L: C\rightarrow C$ defined as follows: if $f: V\rightarrow\mathbb{K}$ is a function in $C$, the function $Lf$ associates to every vertex $v$ in $V$ the average of the values of $f$ at neighbors of $v$. \\
\end{dfn}

\begin{obs} \label{calclapl}
The Laplacian $Lf$ of a radial function $f$ is a radial function, with the following values ($d$ is the degree of the tree):
\begin{itemize}
    \item $Lf(0)=f(1)$,
    \item $Lf(n)=\frac{1}{d}f(n-1)+\frac{d-1}{d}f(n+1)$ for all $n\geq 1$. \\
\end{itemize}
\end{obs}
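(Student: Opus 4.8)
The plan is to compute $Lf$ directly from Definition \ref{lapl}, using nothing beyond the tree structure and the transitivity of $K = G(o)$ on the boundary.

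First I would check that $Lf$ is again radial, so that the notation $Lf(n)$ makes sense. By the observation preceding Definition \ref{lapl}, a function in $C$ is radial if and only if it is $K$-invariant; and the operator $L$ is defined purely in terms of the adjacency relation of $T$, hence commutes with every automorphism of $T$, in particular with every $k \in K$. Therefore $L$ preserves $K$-invariance, so $Lf$ is $K$-invariant, i.e.\ radial.

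Next I would treat the base vertex. The vertex $o$ has exactly $d$ neighbors, and every one of them lies at distance $1$ from $o$; averaging $f$ over them gives $Lf(0) = \frac1d\sum_{w\,:\,w\sim o} f(w) = \frac1d\cdot d\cdot f(1) = f(1)$. For $n \ge 1$, fix any vertex $v$ with $d(v,o)=n$ and examine its $d$ neighbors. A standard fact about trees, which follows from the uniqueness of geodesics, is that for every edge $\{x,y\}$ one has $\lvert d(x,o)-d(y,o)\rvert = 1$; hence each neighbor of $v$ lies at distance $n-1$ or $n+1$ from $o$. Exactly one neighbor lies at distance $n-1$, namely the penultimate vertex of the (unique) geodesic from $v$ to $o$, and it is the only such neighbor precisely because that geodesic is unique. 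Consequently the remaining $d-1$ neighbors lie at distance $n+1$, and averaging $f$ over all $d$ neighbors of $v$ yields $Lf(n) = \frac1d f(n-1) + \frac{d-1}{d} f(n+1)$.

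The only point that requires any argument at all is the claim that, among the $d$ neighbors of a vertex at distance $n\ge 1$ from $o$, exactly one is strictly closer to $o$ and the other $d-1$ are strictly farther; this rests entirely on the absence of cycles in $T$. Everything else is a direct count, so I do not expect a genuine obstacle here.
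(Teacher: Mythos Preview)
Your proof is correct; in fact the paper states this as an observation without proof, since the computation is immediate from the definition of the Laplacian and the tree structure. Your write-up simply makes explicit the one-line counting argument the paper leaves implicit.
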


\begin{prp}
The function $f$ is an eigenfunction of the Laplacian, i.~e.~there exists some $\alpha\in\mathbb{K}$ such that $Lf=f\cdot\alpha$.
\end{prp}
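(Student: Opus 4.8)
The plan is to exploit the commutativity of the Hecke algebra together with the one-dimensionality of $H^K$ established in Proposition \ref{gelfand}. The Laplacian $L$ restricted to radial functions is, up to normalization, the convolution operator given by the characteristic function of the sphere of radius $1$ in $K\backslash G/K$; that is, $L$ arises from an element of the convolution algebra $\mathcal{A}$ considered in the proof of Proposition \ref{gelfand}. Concretely, if $h \in \mathcal{A}$ is the (suitably normalized) indicator of the double coset $K g_1 K$ where $g_1$ moves $o$ to an adjacent vertex, then the action of $h$ on a function $\varphi \in C$ by convolution sends $\varphi$ to the function whose value at a vertex $v$ is the average of $\varphi$ over the neighbors of $v$, i.e.\ exactly $L\varphi$. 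I would first verify this identification carefully, using that $K$ acts transitively on $\partial T$ (hence on the neighbors of each vertex) so that the double coset $Kg_1K$ is precisely $\{g \in G : d(go, o) = 1\}$.

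Next I would observe that $H$ is invariant under the action of $\mathcal{A}$: since $H$ is a $G$-subrepresentation of $C$ and each element of $\mathcal{A}$ is compactly supported and bi-$K$-invariant, convolution by such an element is a finite linear combination of operators of the form $\varphi \mapsto \int_K \Pi(g k)\varphi\, dk$, which map the $G$-invariant, smooth space $H$ into itself. (One must check that convolution by $h$ preserves $H$ rather than just $C$; this follows from smoothness of vectors in $H$ and the fact that $h$ is compactly supported, so the convolution is a finite average of translates.) In particular $Lf \in H$. But $L$ commutes with the $G$-action, so $Lf$ is again $K$-invariant: for $k \in K$ we have $\Pi(k)(Lf) = L(\Pi(k)f) = Lf$. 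Hence $Lf \in H^K$, and by Proposition \ref{gelfand} the space $H^K$ is one-dimensional, spanned by $f$. Therefore $Lf = \alpha f$ for some scalar $\alpha \in \mathbb{K}$, which is exactly the claim.

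The one subtlety worth flagging: we need $Lf \ne 0$ is not required (if $Lf = 0$ then $\alpha = 0$ and we are still fine), but we do need to know that $Lf$ genuinely lies in $H$ and not merely in the ambient space $C$. The cleanest way to see this is to write $Lf = h * f$ for the explicit $h \in \mathcal{A}$ identified above and note that $h * f = \sum_{i} c_i\, \Pi(g_i) f'$ for a finite set of group elements $g_i$, where $f' = \int_K \Pi(k) f\, dk$; since $f$ is already $K$-invariant, $f' = f$, so $Lf$ is literally a finite $\mathbb{K}$-linear combination of $G$-translates of $f$, all of which lie in $H$. This makes the argument self-contained and avoids any appeal to completeness of $H$.

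The main obstacle I anticipate is purely bookkeeping rather than conceptual: pinning down the normalization constant relating the convolution operator $h * (\cdot)$ to the combinatorial Laplacian of Definition \ref{lapl}, and making sure the Haar-measure conventions on $G$ and on $K$ are consistent (the paper has normalized $K$ to have total mass $1$). One has to check that, with these conventions, $h * \varphi$ evaluated at a coset $gK$ equals the average of $\varphi$ over the $d$ neighbors of $go$, which uses transitivity of $K$ on the sphere of radius $1$ around $o$ to see that the relevant double coset decomposes into exactly $d$ cosets in $G/K$. Once that is in place, the eigenfunction conclusion is immediate from Proposition \ref{gelfand}, and no case analysis on $\mathbb{K}$ is needed here since it was already absorbed into that proposition.
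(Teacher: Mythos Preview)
Your proposal is correct and follows essentially the same approach as the paper. The paper's proof simply makes your argument concrete: rather than invoking the Hecke algebra, it directly averages the $d$ translates $\pi(g_w)f$ (one for each neighbor $w$ of $o$), computes the resulting radial function value-by-value via Observation \ref{calclapl}, and identifies it with $Lf$; your convolution by the indicator of $Kg_1K$ is exactly this averaging, and both proofs conclude by invoking the one-dimensionality of $H^K$ from Proposition \ref{gelfand}.
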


\begin{proof}
Since the action of $G$ on $V$ is transitive, for every neighbor $w$ of $o$ there is an element $g\in G$ such that $go=w$. If we consider the function $\pi(g)f$, it is a radial function with respect to $w$: for every $n$, the value of $\pi(g)f$ at any vertex at distance $n$ from $w$ is equal to $f(n)$. \\

We repeat the process for all neighbors of $o$, and we average the functions $\pi(g)f$ that we obtain. We get a function $Af\in H$, radial with respect to $o$. We have:
\begin{itemize}
    \item $Af(0)=f(1)$,
    \item $Af(n)=\frac{1}{d}f(n-1)+\frac{d-1}{d}f(n+1)$ for all $n\geq 1$.
\end{itemize}
Therefore, due to Observation \ref{calclapl}, $Af=Lf$. \\

Since the space of radial functions in $H$ (which is the same as the space of $K$-invariant functions in $H$) is $1$-dimensional and both $f$ and $Af$ are in $H$, we must have $Af=f\cdot\alpha$, and hence $Lf=f\cdot\alpha$, for some $\alpha\in\mathbb{K}$ (recall that we intend the scalar multiplication to be on the right if $\mathbb{K}=\mathbb{H}$). \\
\end{proof}

It is immediate to see, from Observation \ref{calclapl}, that the radial function $f$ is uniquely determined by $f(0)$ and by the property $Lf=f\cdot\alpha$. In particular, if $f(0)=0$ then $f$ is identically zero; therefore, we might assume $f(0)\neq 0$, and up to multiplying by a scalar $f(0)=1$. We can then calculate $f(1)=\alpha$ and $f(2)=\frac{1}{d-1}(df(1)\cdot\alpha-f(0))=\frac{(\alpha^2 d)-1}{d-1}$. In general, $f(n)$ is a polynomial in $\alpha$ with real coefficients for every $n$. This implies that even in the case $\mathbb{K}=\mathbb{H}$, $\alpha$ commutes with $f(n)$ for every $n$, therefore $f$ satisfies $Lf=\alpha\cdot f$, where the notation indicates that each value of $f$ is multiplied by $\alpha$ on the left. This is particularly convenient since the space $\{h\in C: Lh=\alpha\cdot h\}$ is a right $\mathbb{K}$-vector subspace of $C$.\\

If $\alpha=1$, the function $f$ is constant on $V$ and thus $G$-invariant. Hence $H$ is generated by $f$ and the representation $(\pi,H)$ is the trivial representation. If $\alpha=-1$, the function $f$ has the same value $f(0)$ at every vertex at even distance from $o$ and the value $-f(0)$ at every vertex at odd distance from $o$. An element of $G$ sends $f$ to itself if it moves vertices by an even distance, and to $-f$ otherwise. The space $H$ is again $1$-dimensional, and the representation $(\pi,H)$ is the $1$-dimensional character of order $2$; its kernel is the subgroup that moves vertices by an even distance. \\

We are left with the general case $\alpha\neq\pm 1$. This implies $f(2)\neq f(0)$: otherwise, we would have $(\alpha ^2 d)-1=d-1$ and thus $\alpha=\pm 1$. \\

We are ready to prove a major result of this paper, which offers a form of classification of irreducible spherical representations of $G$. \\

\begin{thm} \label{sph}
For any irreducible spherical representation $(\pi,H)$ of $G$ that is not $1$-dimensional, there exists $\alpha\in\mathbb{K}$, $\alpha\neq\pm 1$, such that the representation is equivalent to the left action of $G$ on the subspace of $C$ defined by $\{h\in C: Lh=\alpha\cdot h\}$. Conversely, for every $\alpha\in\mathbb{K}$, $\alpha\neq\pm 1$, there is an irreducible spherical representation of $G$ that results from the construction.

(Note: It is not guaranteed that all these representations preserve a finite-index sesquilinear form; in fact, we will prove later that this is only possible if $\alpha\in\mathbb{R}$)
\end{thm}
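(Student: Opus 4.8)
The plan is to reduce both directions to a single claim: for every $\alpha\in\mathbb{K}$ with $\alpha\neq\pm1$, the subspace $V_\alpha:=\{h\in C: Lh=\alpha h\}$ of $C$, with the left $G$-action, is an irreducible spherical representation. Granting this, the forward direction runs as follows. If $(\pi,H)$ is irreducible, spherical, and not $1$-dimensional, then Proposition~\ref{subr} (applied to $S=\{o\}$) embeds it $G$-equivariantly into $C$, and by the discussion preceding the theorem $H^K$ is the line $\mathbb{K}f$ spanned by a radial Laplacian eigenfunction $f$ with $Lf=\alpha f$ and $\alpha\neq\pm1$. Since the combinatorial Laplacian commutes with the $G$-action, $V_\alpha$ is $G$-invariant; it contains $f$, and algebraic irreducibility forces $H$ to equal the $\mathbb{K}$-span of the $G$-orbit of $f$, so $H\subseteq V_\alpha$; as $V_\alpha$ is irreducible and $H\neq 0$, we conclude $H=V_\alpha$. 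The converse is then immediate: $V_\alpha$ contains the radial eigenfunction, so it is nonzero and has a nonzero $K$-fixed vector, hence (being irreducible) spherical; and it is not $1$-dimensional, because $\alpha\neq\pm1$ forces $\Pi(g)f$ to be linearly independent from $f$ whenever $g$ moves $o$.

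To prove the claim, first note that $V_\alpha^K$ is $1$-dimensional: a $K$-fixed function in $C$ is radial (as $K$ acts transitively on each sphere about $o$), and a radial $\alpha$-eigenfunction is determined up to a scalar by the recursion of Observation~\ref{calclapl}, so $V_\alpha^K=\mathbb{K}f$. Next, every nonzero $G$-submodule $W\subseteq V_\alpha$ contains $f$: choose $w\in W\setminus\{0\}$; since $G$ is transitive on $V$ we may translate so that $w(o)\neq 0$; since $w$ is smooth it is fixed by a finite-index open subgroup of $K=G(o)$, so $\bar w:=\int_K\Pi(k)w\,dk$ is a finite $\mathbb{K}$-linear combination of translates of $w$ and hence lies in $W$, while $\bar w(o)=w(o)\neq 0$ shows $\bar w$ is a nonzero element of $V_\alpha^K=\mathbb{K}f$; thus $f\in W$. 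Consequently it suffices to show that $V_\alpha$ is generated as a $G$-module by $f$, i.e. that $V_\alpha=\mathrm{span}_{\mathbb{K}}\{f_w:w\in V\}$, where $f_w:=\Pi(g)f$ for any $g$ with $go=w$ (well defined since $f$ is $K$-fixed, and $f_w$ is the radial $\alpha$-eigenfunction centred at $w$): then any nonzero submodule contains $f$, hence the span of its $G$-orbit, which is all of $V_\alpha$.

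I would prove the spanning statement by induction on the least $m$ such that $h\in V_\alpha^{U_m}$, where $U_m:=G(B_m(o))$, using that $U_m$ acts transitively on the set of vertices at each fixed distance beyond each $x\in S_m(o)$ --- a consequence of the independence property --- so that an element of $V_\alpha^{U_m}$ is determined by its values on the ball $B_m(o)$, these being mutually constrained by the eigenvalue equation at interior vertices. The base case $m=0$ is $V_\alpha^K=\mathbb{K}f$. For the inductive step, observe that for two vertices $x,x'\in S_m(o)$ with a common neighbour at distance $m-1$ from $o$, the function $f_x-f_{x'}$ vanishes on $B_{m-1}(o)$ and, on $B_m(o)$, is supported on $\{x,x'\}$ with values $\pm(f(0)-f(2))$, where $f(0)-f(2)\neq 0$ precisely because $\alpha\neq\pm1$. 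Subtracting from $h\in V_\alpha^{U_m}$ a suitable combination of such differences --- one whose restriction to each group of siblings in $S_m(o)$ has zero sum --- produces $h'\in V_\alpha$ that agrees with $h$ on $B_{m-1}(o)$ and is constant on each group of siblings in $S_m(o)$; the eigenvalue equation then propagates this outward to show $h'$ is $U_{m-1}$-invariant, so $h'\in V_\alpha^{U_{m-1}}$ lies in the span by induction, and therefore so does $h$.

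I expect this last induction to be the main obstacle: it requires the precise local structure of $V_\alpha^{U_m}$ (hence the ``half-tree transitivity'' coming from the independence property), the exact shape of the difference functions $f_x-f_{x'}$, and the nonvanishing $f(0)\neq f(2)$ --- the single place where $\alpha\neq\pm1$ is genuinely needed in this direction. The only other point requiring care is that $L$ acts on $H$ by the scalar $\alpha$, equivalently that $\dim_{\mathbb{K}}H^K=1$; this follows by the Schur lemma once $H^K$ is known to be one-dimensional, which is where the Gel'fand-pair analysis preceding the theorem enters.
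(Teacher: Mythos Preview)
Your proof is correct and follows essentially the same approach as the paper: both reduce to showing that $V_\alpha$ is spanned by the $G$-translates of the radial eigenfunction $f$ (using $f(0)\neq f(2)$, which is precisely where $\alpha\neq\pm1$ enters) and that every nonzero $G$-submodule contains $f$ via averaging over $K$. The paper builds the match with $h$ outward from $o$ sphere by sphere, whereas you run an equivalent induction inward on the invariance level $m$ using the same sibling-difference functions $f_x-f_{x'}$; the mathematical content is identical.
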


\begin{proof}
We start by picking a general irreducible spherical representation of $G$ and using the above results. We know that there exist $\alpha\in\mathbb{K}$ and a radial $f\in H\subset C$ with $f(0)=1$ such that $Lf=\alpha\cdot f$ and the subspace of $K$-invariant functions in $H$ is the space of scalar multiples of $f$. We can assume $\alpha\neq\pm 1$, and thus $f(2)\neq f(0)$: otherwise, as we have seen, the representation is $1$-dimensional. \\

Since the representation is irreducible, $H$ is generated (as a vector space) by $\{\pi(g)f: g\in G\}$. A function $\pi(g)f$ is radial with respect to $go$ and satisfies $L\pi(g)f=\alpha\cdot \pi(g)f$ (it is easily seen that both the Laplacian and the left multiplication by $\alpha$ commute with the action of $G$). As the property $Lf=\alpha\cdot f$ is preserved under linear combinations, we have $H\subseteq\{h\in C: Lh=\alpha\cdot h\}$. \\

To obtain the other inclusion, we pick a generic $h\in C$ such that $Lh=\alpha\cdot h$ and show that it is in $H$. Since $H$ is $G$-invariant and $f\in H$, it is enough to show that we can express $h$ as a finite linear combination of translates (by elements of $G$) of $f$. \\

For every $v\in V$, let $N(v)$ be the set of neighbors of $v$ that are further away from the origin than $v$ (if $v=o$ we take the full set of neighbors). To build $h$ as a finite linear combination of translates of $f$, we start with the function $F=f\cdot h(o)$, which agrees with $h$ at the origin (recall that $f(0)=1$). As $LF(o)=\alpha\cdot F(o)=\alpha\cdot h(o)=Lh(o)$, the sum of $F$ over $N(o)$ agrees with $h$. To make the two functions agree at every vertex in $N(o)$, we add to $F$ the function $$\sum_{w\in N(o)}{\pi(g_w)f\cdot(f(0)-f(2))^{-1}(h(w)-F(w))},$$ where $g_w\in G$ sends $o$ to $w$. The sum of the coefficients is zero and this guarantees that $F$ is unchanged at $o$. At every $w\in N(o)$ the function changes by $$f(0)(f(0)-f(2))^{-1}(h(w)-F(w))$$ due to the contribution from $\pi(g_w)f$, and by $$\sum_{z\in N(o)\setminus\{w\}}{f(2)(f(0)-f(2))^{-1}(h(z)-F(z))}=-f(2)(f(0)-f(2))^{-1}(h(w)-F(w))$$ due to the contribution from $\pi(g_z)f$ for $z\neq w$. This adds up to $h(w)-F(w)$, so we have a new function $F$ that agrees with $H$ on the ball $B_1(o)$. \\

We can move forward by applying the same procedure to each of the sets $N(w)$ with $w\in N(o)$: this will allow to update the function $F$ so that it agrees with $h$ inside $N(w)$ without changing the values inside the ball $B_1(o)$, and ultimately get a function that agrees with $h$ on the ball $B_2(o)$. We can continue inductively with $B_3(o)$ and so on. After finitely many steps $F$ and $h$ will agree everywhere: $h$ is left-invariant under some open compact subgroup that contains $G(B_n(o))$ for some $n$ (Proposition \ref{props}(a)), so it is uniquely determined by the property $Lh=\alpha\cdot h$ and the values inside $B_n(o)$, which means that it suffices to have $F$ and $h$ agree on $B_n(o)$. \\

We have shown that the space $H$ coincides with $\{h\in C: Lh=\alpha\cdot h\}$, so the representation from which we started is equivalent to the left action of $G$ on that space. \\

For the converse, we wish to prove that if we start from a generic $\alpha\in\mathbb{K}$, $\alpha\neq\pm 1$, and consider the space $\{h\in C: Lh=\alpha\cdot h\}$ with the left action of $G$, we have a spherical irreducible representation. We know that there is a radial (hence $K$-invariant) nonzero function in this space, so we are only left to prove irreducibility. Suppose there is a nonzero $G$-invariant subspace $X\subset H$ and pick $h\in X$, $h\neq 0$. It suffices to prove that any function in $H$ can be obtained as a finite linear combination of $G$-translates of $h$. \\

We have seen before that every function in $H$ is a finite linear combination of $G$-translates of $f$, therefore it suffices to show that $f$ is a finite linear combination of $G$-translates of $h$. Up to translating, we may assume $h(o)\neq 0$. If we average $h$ over $K$ (with respect to the Haar measure), we get a radial function that has to be a nonzero scalar multiple of $f$. The operation of averaging over $K$ is the same as applying a finite linear combination of $G$-translates (actually $K$-translates), since the function $h$ is left-invariant under $G(B_n(o))$ for some $n$ and the quotient $K/G(B_n(o))=G(o)/G(B_n(o))$ is finite. \\
\end{proof}

\section{Preserved sesquilinear forms}

In this section, we finish the proof of Theorem \ref{Main} in the case where $G$ acts transitively on $V$. In order to do this, we study nondegenerate sesquilinear forms of finite index that can be preserved by the spherical representations that we have classified in Theorem \ref{sph}. \\

We first recall that, due to Proposition \ref{uniqueform}, super-cuspidal and special representations (which are unitary) cannot preserve any nondegenerate sesquilinear form of index $\geq 1$. This means that to prove Theorem \ref{Main}, it suffices to consider the case of spherical representations. \\

Therefore, let $(\pi,H)$ be an irreducible spherical representation of $G$ that is not $1$-dimensional. We apply Theorem \ref{sph} and find that there exists $\alpha\in\mathbb{K}$, $\alpha\neq\pm 1$, such that $H$ is the space of functions $h: V\rightarrow\mathbb{K}$ that are left-invariant under some open compact subgroup of $G$ and satisfy $Lh=\alpha\cdot h$, and $\pi$ is the natural left action of $G$ on this space. \\

We split the space $H$ into a direct sum of $K$-invariant subspaces. First, we define $H_0$ as the $1$-dimensional space generated by the unique (up to scalar multiple) $K$-invariant $f\in H$. For $v\in V$, we define the set $N(v)$ as in the proof of Theorem \ref{sph}: $N(v)$ is the set of neighbors of $v$ that are further away from the origin than $v$ (or the complete set of neighbors if $v=o$). We define the space $\widetilde{H}_1$ as the subspace of $H$ consisting of functions that are invariant under $G(B_1(o))$, and then $H_1$ such that $\widetilde{H}_1=H_0\oplus H_1$, in this way: $$H_1:=\left\{h\in\widetilde{H}_1: \sum_{v\in N(o)}{h(v)}=0\right\}.$$

We can proceed inductively. We define, for $n\in\mathbb{N}$, the set $S_n:=\{v\in V: d(o,v)=n\}$. Then we define, for $n\geq 2$, the space $\widetilde{H}_n$ of functions in $H$ that are invariant under $G(B_n(o))$, and then $$H_n:=\left\{h\in\widetilde{H}_n: \sum_{w\in N(v)}{h(w)}=0\ \ \ \forall v\in S_{n-1}\right\},$$ so that $\widetilde{H}_n=\bigoplus_{i=0}^{n}{H_i}$. \\

The space $H$ is the increasing union of $\widetilde{H}_n$, and hence is the direct sum of the spaces $H_n$. All these spaces are clearly $K$-invariant. We have $\dim(H_0)=1$, $\dim(H_1)=d-1$, and $\dim(H_n)=d(d-2)(d-1)^{n-2}$ for all $n\geq 2$. \\

It is clear that $H_0$ is $K$-irreducible, since it is $1$-dimensional. We show that $H_1$ is also $K$-irreducible. Since $K$ acts transitively on $\partial T$, it also acts transitively on $S_1$. Moreover, if $v\in S_1$ the stabilizer of $v$ in $G$ acts transitively on $\partial T$. This implies that its intersection with $K=G(o)$ acts transitively on $S_1\setminus\{v\}$, which (since $v$ is generic) is the same as saying that the action of $K$ on $S_1$ is doubly transitive. Now, let $X$ be any nonzero $K$-invariant subspace of $H_1$ and let $h\in X$ be nonzero. By averaging $h$ over $K\cap G(v)$, where $v\in S_1$ satisfies $h(v)\neq 0$, we get a nonzero scalar multiple of a function with value $1$ at $v$ and $-\frac{1}{d-1}$ at all other vertices in $S_1$. Since $K$-translates of this function span $H_1$, we must have $X=H_1$. \\

For spaces $H_n$ with $n\geq 2$ we cannot prove $K$-irreducibility, but we can prove the following:

\begin{lem} \label{dimbound}
For every $n\geq 2$, each $K$-irreducible subspace of $H_n$ has dimension at least $\vert S_{n-1}\vert=d(d-1)^{n-2}$.
\end{lem}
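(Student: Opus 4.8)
The idea is to analyze how $K = G(o)$ acts on $H_n$ by tracking the "branching" structure of the tree. Fix $n \geq 2$. A function $h \in H_n$ is determined by its values on $S_n$ (the values on smaller balls being forced: on $B_{n-1}(o)$ the function vanishes because $h \in \widetilde{H}_n$ and the summation conditions propagate the vanishing down, and on vertices at distance $> n$ the function is determined by $Lh = \alpha h$ together with the left-invariance). So $H_n$ embeds $K$-equivariantly into the space of $\mathbb{K}$-valued functions on $S_n$, and the relevant constraint is exactly $\sum_{w \in N(v)} h(w) = 0$ for every $v \in S_{n-1}$. Equivalently, for each vertex $v \in S_{n-1}$ the restriction of $h$ to the $(d-1)$-element set $N(v)$ lies in the "sum-zero" subspace, and $H_n \cong \bigoplus_{v \in S_{n-1}} W_v$ where each $W_v$ is a copy of the standard $(d-2)$-dimensional representation of the symmetric group permuting $N(v)$ — but of course $K$ does not act on each $W_v$ separately; it permutes the vertices $v \in S_{n-1}$ transitively and acts on the associated spaces accordingly.

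The key point is that $K$ acts transitively on $S_{n-1}$. Therefore, given any nonzero $K$-irreducible subspace $X \subseteq H_n$ and any nonzero $h \in X$, pick $v_0 \in S_{n-1}$ with the restriction $h|_{N(v_0)}$ nonzero. For every $v \in S_{n-1}$ there is $k \in K$ with $k v_0 = v$, and then $kh \in X$ has nonzero restriction to $N(v)$. Thus the "support on $S_{n-1}$" — the set of $v \in S_{n-1}$ for which some element of $X$ is nonzero near $v$ — is $K$-invariant and nonempty, hence all of $S_{n-1}$. Now I would argue that $X$ must project nontrivially, in a suitable sense, onto each $W_v$; more precisely, I would like to conclude that the restriction map $X \to W_v$ (restriction of functions to $N(v)$, then projecting to sum-zero part — which is automatic) is nonzero for every $v$. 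Combined with the fact that these $|S_{n-1}|$ restriction maps together are injective on $H_n$ (a function in $H_n$ vanishing near every $v \in S_{n-1}$ is zero), I would hope to bound $\dim X$ from below. Actually the cleanest route: the restriction maps $\rho_v \colon H_n \to W_v$ assemble to an isomorphism $H_n \xrightarrow{\sim} \bigoplus_v W_v$; if $\rho_v(X) \neq 0$ for every $v$, and if moreover for each fixed $v$ the subgroup $K_v := K \cap G(v')$ for appropriate $v'$ acts on $W_v$ in a way that forces $\rho_v(X)$ to be either $0$ or all of $W_v$ — this would give $\dim \rho_v(X) = d-2$ for all $v$, and then one still needs to descend from "$\rho_v(X) = W_v$ for all $v$" to a dimension count. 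That last descent is not automatic (a subspace can surject onto each summand while being much smaller than the whole), so I expect the actual argument to be subtler: one should instead find, inside $X$, enough linearly independent vectors directly. Since $K$ acts transitively on $S_{n-1}$ and $h$ has nonzero component near $v_0$, the $K$-orbit of $h$ contains a vector with nonzero component near each $v$; a counting/independence argument on these orbit vectors — e.g. showing that the "leading supports" can be separated — should yield at least $|S_{n-1}|$ linearly independent elements.

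Here is the argument I would actually commit to. For $v \in S_{n-1}$, let $P_v \colon H_n \to H_n$ be the $K \cap G(v)$-average composed appropriately — more simply, note $\dim H_n = (d-2)|S_{n-1}|$ and $\dim(\text{functions on } S_{n-1}) = |S_{n-1}|$; the map $\sigma \colon H_n \to \mathbb{K}^{S_{n-1}}$ sending $h$ to $v \mapsto \sum_{w \in N(v)} h(w)^2$ is not linear, so instead use: average $h \in X$, $h \neq 0$, over a suitable compact subgroup of $K$ fixing a geodesic ray through $v_0$ out to distance $n$; this produces a vector $h_0 \in X$ supported (near $S_{n-1}$) only at $v_0$ and its value there is the radial-type function $1$ at one element of $N(v_0)$, $-\tfrac{1}{d-1}$ at the others — nonzero because it has nonzero inner product with $h$ by the same Hahn–Banach/averaging trick used in Proposition \ref{smooth}. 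Then translating $h_0$ by coset representatives of $K$ over $S_{n-1}$ gives $|S_{n-1}|$ vectors in $X$ whose "near-$S_{n-1}$" supports are the singletons $\{v\}$, $v \in S_{n-1}$, hence linearly independent (their images under $\sigma$ are the coordinate vectors). Therefore $\dim X \geq |S_{n-1}| = d(d-1)^{n-2}$.

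**Main obstacle.** The delicate step is producing the vector $h_0 \in X$ with support concentrated at a single $v_0 \in S_{n-1}$: one must exhibit an explicit compact subgroup $K' < K$ that fixes all of $B_n(o)$ except it still acts transitively enough on the part of the tree hanging off $N(v_0)$ so that averaging kills every component of $h$ except the one at $v_0$, yet does not kill that component. This requires using the independence property and the transitivity of $G(v_0)$ on $\partial T$ carefully — it is essentially the same mechanism as in the proof that $H_1$ is $K$-irreducible, applied simultaneously to all vertices of $S_{n-1}$ other than $v_0$ — and checking that the averaged vector is genuinely nonzero (not merely formally supported at $v_0$) is where the Hahn–Banach argument from Proposition \ref{smooth} re-enters. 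I would expect roughly half the proof's length to be devoted to making this localization precise.
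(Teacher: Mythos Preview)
Your overall strategy---produce inside $X$ a vector whose restriction to $S_n$ is supported on a single $N(v_0)$, then translate by $K$ to get $|S_{n-1}|$ independent vectors---is exactly the paper's. The difference is in the localization step, and the paper's execution is cleaner than the one you sketch. Rather than averaging over the stabilizer of a geodesic ray (which forces you to worry about whether the result is nonzero and to invoke a Hahn--Banach style argument), the paper uses the independence property directly: by Amann's Proposition~11 one has $G(B_{n-1}(o)) = \prod_{v \in S_{n-1}} G(D_v)$ as a product of commuting factors, where $D_v$ is the anti-cone at $v$. Averaging $h$ over the partial product $\prod_{z \in S_{n-1}\setminus\{v_0\}} G(D_z)$ leaves $h$ \emph{unchanged} on $N(v_0)$ (each factor fixes $N(v_0)$ pointwise) while killing it on every other $N(z)$ (since each $G(D_z)$ acts transitively on $N(z)$ and the sum there is zero). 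Non-vanishing is then automatic, with no appeal to Hahn--Banach needed, and the proof is three lines rather than the half-page you anticipate. Your remark that the averaging subgroup should ``fix all of $B_n(o)$'' is a slip---such a subgroup acts trivially on $H_n$---but the correct subgroup $\prod_{z\neq v_0} G(D_z)$ sits inside $G(B_{n-1}(o))$, which is the point.
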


\begin{proof}
For $v\in V$, $v\neq o$, we define the \textit{cone} $C_v$ as the set of vertices $w$ such that the unique path between $o$ and $w$ passes through $v$. We define the \textit{anti-cone} $D_v:=(V\setminus C_v)\cup\{v\}$. As $G$ has the independence property, it follows from Proposition 11 of \cite{amann} (applied to the subtree $B_{n-1}(o)$) that $$G(B_{n-1}(o))=\prod_{v\in S_{n-1}}G(D_v).$$ Clearly, all factors commute. If $X$ is a $K$-irreducible subspace of $H_n$ and $h\in X$ is nonzero, we can choose $v\in S_{n-1}$ such that $h(w)\neq 0$ for at least one $w\in N(v)$. We average $h$ across $\prod_{z\in S_{n-1}\setminus\{v\}}G(D_z)$ and get a function that is unchanged on $N(v)$ and zero on $N(z)$ for every $z\in S_{n-1}\setminus\{v\}$. By the transitivity of $K$ on $S_{n-1}$ we can then get such a function for any $v\in S_{n-1}$. In this way, we get $\vert S_{n-1}\vert$ linearly independent functions in $X$, which proves our claim. \\
\end{proof}

Since $\vert S_{n-1}\vert=d(d-1)^{n-2}>\dim{H_{n-1}}$ for every $n\geq 2$, the lemma implies the following:

\begin{cor} \label{dec}
Let $i,j\in\mathbb{N}$ with $i\neq j$. Then, no $K$-irreducible subspace of $H_i$ can be isomorphic to any $K$-irreducible subspace of $H_j$. \\
\end{cor}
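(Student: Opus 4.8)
The plan is to prove this by a pure dimension count. Since isomorphic $K$-modules have the same dimension, it suffices to separate the possible dimensions of $K$-irreducible subspaces of $H_i$ from those of $K$-irreducible subspaces of $H_j$, using the lower bound of Lemma \ref{dimbound} together with the explicit values $\dim H_0 = 1$, $\dim H_1 = d-1$, and $\dim H_n = d(d-2)(d-1)^{n-2}$ for $n \geq 2$ recorded above, and the observation (made just before the statement) that $|S_{n-1}| = d(d-1)^{n-2} > \dim H_{n-1}$ for every $n \geq 2$.

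First I would reduce, by the symmetry of the statement in $i$ and $j$, to the case $i < j$, so that $j \geq 1$. If $j = 1$, then $i = 0$, and the claim is immediate: a $K$-irreducible subspace of $H_0$ has dimension $1$, while the only $K$-irreducible subspace of $H_1$ is $H_1$ itself (shown above), of dimension $d-1 \geq 2$, so no isomorphism is possible. Now assume $j \geq 2$. A $K$-irreducible subspace $W \subseteq H_j$ satisfies $\dim W \geq |S_{j-1}| = d(d-1)^{j-2}$ by Lemma \ref{dimbound}, whereas any $K$-irreducible subspace of $H_i$ has dimension at most $\dim H_i$. So it is enough to show $\dim H_i < d(d-1)^{j-2}$ for every $i$ with $0 \leq i \leq j-1$. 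From the formulas above one checks that $n \mapsto \dim H_n$ is strictly increasing for $d \geq 3$ (the ratios of consecutive terms all exceed $1$: $1 < d-1$, then $d^2 - 3d + 1 > 0$ gives $d-1 < d(d-2)$, and for $n \geq 2$ the ratio is $d-1$), hence $\dim H_i \leq \dim H_{j-1}$, and $\dim H_{j-1} < |S_{j-1}|$ is exactly the inequality noted before the statement. Chaining these bounds, any $K$-irreducible subspace $W'$ of $H_i$ has $\dim W' \leq \dim H_i \leq \dim H_{j-1} < |S_{j-1}| \leq \dim W$, so $W$ and $W'$ cannot be isomorphic.

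I do not expect a genuine obstacle here. The only points requiring a little care are the bottom of the range, where Lemma \ref{dimbound} does not apply to $H_0$ and $H_1$ and the claim must be checked directly (trivially, since $\dim H_0 = 1 \neq d-1 = \dim H_1$), and the elementary verification that the sequence $(\dim H_n)_n$ is strictly increasing, which is what lets one replace $\dim H_i$ by $\dim H_{j-1}$ in the comparison.
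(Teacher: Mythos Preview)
The proposal is correct and follows essentially the same dimension-counting approach as the paper: compare the lower bound from Lemma~\ref{dimbound} against $\dim H_{j-1}$ via the inequality $|S_{n-1}|>\dim H_{n-1}$. You have simply spelled out the details (the base case $i=0$, $j=1$, and the monotonicity of $\dim H_n$) that the paper leaves implicit in its one-line deduction.
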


To continue the proof of Theorem \ref{Main}, we fix a $G$-invariant nondegenerate sesquilinear form $B$ on $H$. It follows from the Schur Lemma and Corollary \ref{dec} that $B$ must be the sum (for $n\in\mathbb{N}$) of $K$-invariant sesquilinear forms $B_n$ on $H_n$. This implies that if $\mathcal{A}$ and $\mathcal{B}$ are disjoint subsets of $\mathbb{N}$ then every function in $\bigoplus_{n\in\mathcal{A}}{H_n}$ must be orthogonal to every function in $\bigoplus_{n\in\mathcal{B}}{H_n}$. We can now show the following:

\begin{lem} \label{alphar}
If the representation preserves the nondegenerate sesquilinear form $B$ (which we assume to be antilinear in the first variable and linear in the second), then $\alpha\in\mathbb{R}$.
\end{lem}

\begin{proof}
Let $f$ be the unique function in $H_0$ satisfying $f(0)=1$. It satisfies $f(1)=\alpha$ and $f(2)=\frac{(\alpha^2 d)-1}{d-1}$. Let $h$ be a function in $H_1$ satisfying $h(v)=1$ for one particular $v\in N(o)$ and $h(w)=-\frac{1}{d-1}$ for all $w\in N(o)\setminus\{v\}$. Since $f\in H_0$ and $h\in H_1$, we have $B(f,h)=0$. \\

If we consider $g\in G$ that sends $o$ to $v$ and $v$ to $o$, we have $\pi(g)f=f\cdot\alpha+h\cdot(1-\alpha^2)$ and $\pi(g)h=f-h\cdot\alpha$. The latter implies $$B(f,f)=B(h,h)\left(1-\vert\alpha\vert^2\right).$$ Observe that $B(f,f)\neq 0$, because otherwise the form would be degenerate ($f$ is orthogonal to all functions in $\bigoplus_{n\geq 1}{H_n}$). Hence, the last relation implies $B(h,h)\neq 0$. \\

As $B(f,h)=0$ it must be $B(\pi(g)f,\pi(g)h)=0$. This means that $B(f\cdot\alpha+h\cdot(1-\alpha^2),f-h\cdot\alpha)=0$, and therefore $$B(f,f)\overline{\alpha}-B(h,h)(1-\overline{\alpha}^2)\alpha=0.$$ Combining this with the expression we got before for $B(f,f)$ we get $$B(h,h)\left(1-\vert\alpha\vert^2\right)\overline{\alpha}=B(h,h)(1-\overline{\alpha}^2)\alpha.$$ We can divide by $B(h,h)$ on the left as it is nonzero. By developing the two expressions we get $$\overline{\alpha}-\vert\alpha\vert^2\overline{\alpha}=\alpha-\overline{\alpha}\vert\alpha\vert^2$$ and, therefore, $\overline{\alpha}=\alpha$, meaning that $\alpha\in\mathbb{R}$. \\
\end{proof}

\begin{cor} \label{tensor}
Every spherical irreducible complex or quaternionic representation of $G$ preserving a nondegenerate sesquilinear form is obtained by tensoring with $\mathbb{C}$ or $\mathbb{H}$ a spherical irreducible real representation of $G$ preserving a nondegenerate bilinear form. \\
\end{cor}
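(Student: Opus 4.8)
The plan is to read the statement off Theorem~\ref{sph} and Lemma~\ref{alphar} and then to descend the preserved form to the real points. By Lemma~\ref{alphar} the parameter $\alpha$ attached to $(\pi,H)$ by Theorem~\ref{sph} is real (the two $1$-dimensional spherical representations are defined over $\mathbb{R}$ and carry the standard positive definite form, so they are covered trivially). Let $(\pi_\mathbb{R},H_\mathbb{R})$ be the real spherical representation furnished by Theorem~\ref{sph} for the same $\alpha$, i.e.\ $H_\mathbb{R}$ is the space of $\mathbb{R}$-valued functions on $V$ that are left-invariant under some open compact subgroup and satisfy $Lh=\alpha h$, with the left translation action. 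Because $\alpha$ and $L$ are real, decomposing a $\mathbb{K}$-valued solution of $Lh=\alpha h$ along an $\mathbb{R}$-basis of $\mathbb{K}$ yields $\mathbb{R}$-valued solutions left-invariant under the same open compact subgroup; hence the obvious map $H_\mathbb{R}\otimes_\mathbb{R}\mathbb{K}\to H$ is a $G$-equivariant isomorphism, so $(\pi,H)\cong(\pi_\mathbb{R},H_\mathbb{R})\otimes_\mathbb{R}\mathbb{K}$. It then remains only to equip $H_\mathbb{R}$ with a $\pi_\mathbb{R}$-invariant nondegenerate symmetric bilinear form of finite index.

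First I would restrict the preserved Hermitian form $B$ to $H_\mathbb{R}\times H_\mathbb{R}\subset H\times H$. Its components along an $\mathbb{R}$-basis of $\mathbb{K}$ are $G$-invariant $\mathbb{R}$-bilinear forms on $H_\mathbb{R}$, and at least one of them is nonzero since $B$ is nondegenerate on the line $H_0$ spanned by the radial eigenfunction $f$; thus $\mathrm{Hom}_G(H_\mathbb{R},\widetilde{H_\mathbb{R}})\neq 0$, and by irreducibility $H_\mathbb{R}$ is self-contragredient. On the other hand $H_\mathbb{R}^K$ is the line of radial eigenfunctions of $L$ for the eigenvalue $\alpha$, so $\dim H_\mathbb{R}^K=1$ and the Schur Lemma gives $\mathrm{End}_G(H_\mathbb{R})=\mathbb{R}$. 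Consequently the space of $G$-invariant $\mathbb{R}$-bilinear forms on $H_\mathbb{R}$ is one-dimensional, spanned by a single form $\beta$ which (being unique up to scalar) is either symmetric or skew-symmetric; write $B|_{H_\mathbb{R}}=\lambda\beta$ with $\lambda\in\mathbb{K}\setminus\{0\}$. Conjugate-symmetry of $B$ then forces $\lambda\in\mathbb{R}$ if $\beta$ is symmetric, and $\lambda$ purely imaginary if $\beta$ is skew-symmetric.

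The crux is to exclude the skew case. If $\beta$ were skew-symmetric, pick inductively infinitely many mutually $\beta$-orthogonal pairs $(e_k,f_k)$ in $H_\mathbb{R}$ with $\beta(e_k,f_k)=1$ (possible since $\beta$ is nondegenerate and $H_\mathbb{R}$ is infinite-dimensional); expanding $B$ on the vectors $e_k+f_k\cdot i$ (resp.\ $e_k-f_k\cdot i$) and using that $\lambda$ is purely imaginary and $\beta(v,v)=0$, one computes that $B$ is definite of one fixed sign and infinite dimension on their span, contradicting that $B$ has finite index. Hence $\beta$ is symmetric and $B|_{H_\mathbb{R}}=\lambda\beta$ is a nonzero real multiple of $\beta$. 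This form is nondegenerate on $H_\mathbb{R}$, for if $v\in H_\mathbb{R}$ lay in its radical then $B(v,\cdot)$ would vanish on $H_\mathbb{R}$, hence on $\mathbb{K}\cdot H_\mathbb{R}=H$, contradicting nondegeneracy of $B$; and it has finite index, for otherwise $\beta$ would possess an infinite-dimensional negative-definite (or, if $\lambda<0$, positive-definite) subspace, which $B|_{H_\mathbb{R}}=\lambda\beta$ would turn into an infinite-dimensional $B$-negative-definite subspace of $H$. Therefore $(\pi_\mathbb{R},H_\mathbb{R})$ preserves the nondegenerate finite-index symmetric form $\beta$ and $(\pi,H)\cong(\pi_\mathbb{R},H_\mathbb{R})\otimes_\mathbb{R}\mathbb{K}$, which is the assertion. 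The step I expect to require the most care is excluding a skew-symmetric $\beta$ — equivalently, checking that the descended form is genuinely $\mathbb{R}$-valued — together with the bookkeeping of the components of $\mathbb{K}$ over $\mathbb{R}$ in the self-contragredience and transfer-of-index arguments (and the extra care needed for noncommutativity when $\mathbb{K}=\mathbb{H}$).
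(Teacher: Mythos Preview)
Your argument is correct and more self-contained than the paper's treatment. The paper states the corollary without proof: the tensor-product identification $(\pi,H)\cong(\pi_\mathbb{R},H_\mathbb{R})\otimes_\mathbb{R}\mathbb{K}$ is regarded as immediate from $\alpha\in\mathbb{R}$ (Lemma~\ref{alphar}) together with Theorem~\ref{sph}, while the invariant finite-index bilinear form on $H_\mathbb{R}$ is supplied by the explicit form $Q$ built immediately afterward in Proposition~\ref{qpres} --- a forward reference. Your descent of $B$ to $H_\mathbb{R}$ avoids that forward reference and proves the corollary on its own terms; the paper's route has the advantage that the explicit $Q$ is needed anyway to pin down the index for the main theorem.

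One simplification worth noting: your exclusion of the skew-symmetric case is more work than necessary. From the proof of Lemma~\ref{alphar} you already know $B(f,f)\neq 0$ with $f\in H_\mathbb{R}$, and since $B$ is Hermitian this value is real; writing $B(f,f)=\lambda\,\beta(f,f)$ with $\beta(f,f)\in\mathbb{R}$ immediately forces $\beta(f,f)\neq 0$ (hence $\beta$ is symmetric) and $\lambda\in\mathbb{R}$ in one stroke. This bypasses both the symplectic-basis construction and the quaternionic bookkeeping you flag at the end.
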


Therefore, from now on, we may assume $\mathbb{K}=\mathbb{R}$, and we are left to show that no nondegenerate bilinear form $B$ of finite index $>1$ can be preserved by a spherical representation of $G$. \\

The strategy will be to present, for every $\alpha\in\mathbb{R}\setminus\{\pm 1\}$, an explicit nondegenerate bilinear form of index $0$ or $1$ that is preserved by the corresponding representation of $G$. This will exclude, by Proposition \ref{uniqueform}, that the representation can preserve a nondegenerate bilinear form of index $>1$, proving the theorem. Furthermore, this will provide a classification of nondegenerate bilinear forms that can be preserved by a spherical irreducible representation. As we have said, from now on we work with real representations, but the same results will hold for complex and quaternionic representations. \\

Recall that for every $n\geq 1$, a function in $H_n$ is uniquely determined by its values on the set of vertices $S_n=\{v\in V: d(o,v)=n\}$. More precisely, each function $S_n\rightarrow\mathbb{R}$ with sum $0$ on the set of neighbors of any vertex in $S_{n-1}$ can be extended in a unique way to a function in $H_n$. \\

We observe that if we define the bilinear form $Q_n$ on $H_n$ ($n\geq 1$) so that it coincides with the restriction of the standard scalar product on the space of functions $S_n\rightarrow\mathbb{R}$ to the subspace consisting of functions with sum $0$ on the set of neighbors of any vertex in $S_{n-1}$, then it is preserved by the action of $K=G(o)$. \\

Let $Q_0$ be the bilinear form on $H_0$ defined by $Q_0(f,f)=1$ (where $f\in H_0$ satisfies $f(0)=1$). We define the bilinear form $Q$ on $H$ as follows: $$Q:=\frac{d}{d-1}(1-\alpha^2)Q_0+\sum_{n\geq 1}{Q_n}.$$

This bilinear form is nondegenerate with index $0$ if $\vert\alpha\vert<1$ and index $1$ if $\vert\alpha\vert>1$. \\

To finish the proof of Theorem \ref{Main} in the case where $G$ acts transitively on $V$, it suffices to show the following:

\begin{prp} \label{qpres}
The bilinear form $Q$ is preserved under $G$. \\
\end{prp}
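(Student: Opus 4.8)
The plan is to reduce the proposition to a statement about the values of $Q$ on a convenient spanning set, and then to prove that statement by an explicit computation with the $H_i$-decomposition.

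Recall from the proof of Theorem~\ref{sph} that $H$ is spanned by the $G$-translates of $f$. For $v\in V$ fix $g_v\in G$ with $g_v o=v$ and put $\phi_v:=\pi(g_v)f$; since $f$ is $K$-invariant this is independent of the choice of $g_v$, one has $\pi(g)\phi_v=\phi_{gv}$ for all $g\in G$, and $\phi_v$ is the function $w\mapsto f(d(v,w))$. As $\{\phi_v\}_{v\in V}$ spans $H$ and $Q$ is bilinear, $Q$ is determined by the numbers $Q(\phi_v,\phi_w)$, and since $d(gv,gw)=d(v,w)$ the form $Q$ will be $G$-invariant once we prove
\[
Q(\phi_v,\phi_w)=c_0\,f\bigl(d(v,w)\bigr)\quad\text{for all }v,w\in V,\qquad c_0:=\tfrac{d}{d-1}(1-\alpha^2).
\]
Note $Q(\phi_o,\phi_o)=c_0$, since $\phi_o=f\in H_0$, $Q|_{H_0}=c_0Q_0$ and $Q_0(f,f)=1$. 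The main tool is the identity $\sum_{u\sim v}\phi_u=d\alpha\,\phi_v$, valid for every vertex $v$, the sum running over the $d$ neighbours $u$ of $v$: evaluate both sides at an arbitrary $w$ and use the recursion of Observation~\ref{calclapl} for $f$. Combined with bilinearity it shows that, for fixed $v$, the function $w\mapsto Q(\phi_v,\phi_w)$ is an eigenfunction of the Laplacian $L$ with eigenvalue $\alpha$, and symmetrically in $v$.

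The crucial step is the diagonal case $Q(\phi_v,\phi_v)=c_0$ for $v\in S_n$. I would get it by decomposing $\phi_v$ along $H=\bigoplus_iH_i$. Since $\phi_v$ is fixed by $G(B_n(o))$ it lies in $\widetilde H_n=\bigoplus_{i=0}^nH_i$; its $H_0$-component is $f(n)\,f$ (average over $K$), and for $1\le i\le n$ its $H_i$-component is $\phi_v|_{S_i}$ minus the mean over each ``children block'' $N(u)$, $u\in S_{i-1}$. Writing $o=x_0,x_1,\dots,x_n=v$ for the geodesic $[o,v]$, the restriction $\phi_v|_{S_i}$ is constant on every children block except the block of children of $x_{i-1}$, so the $H_i$-component is supported on that block; expressing its values through $f$ and simplifying with $(d-1)f(m+1)=d\alpha f(m)-f(m-1)$ gives
\[
Q(\phi_v,\phi_v)=c_0f(n)^2+\tfrac{d}{d-1}\bigl(f(n-1)-\alpha f(n)\bigr)^2+\tfrac{d-2}{d-1}\sum_{m=0}^{n-2}\bigl(f(m)-f(m+2)\bigr)^2,
\]
the middle term being absent for $n=0$. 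An induction on $n$, using once more the recursion for $f$, shows the right-hand side telescopes to $c_0$.

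It then remains to prove the displayed formula for $Q(\phi_v,\phi_w)$ by induction on $n=d(o,v)$. Set $E_v(w):=Q(\phi_v,\phi_w)-c_0f(d(v,w))$. This is an $\alpha$-eigenfunction of $L$, and it is invariant under the pointwise stabiliser $G(\gamma)$ of $\gamma=[o,v]$: such an element lies in $K=G(o)$ and fixes $v$, hence fixes both $\phi_v$ and $w\mapsto f(d(v,w))$, and $Q$ is $K$-invariant. The restriction map from $G(\gamma)$-invariant $\alpha$-eigenfunctions of $L$ to functions on $\{x_0,\dots,x_n\}$ is injective, since such an eigenfunction is determined by its values on $\gamma$: the eigenvalue equation propagates those values radially into each branch hanging off an $x_i$, and $G(\gamma)$ acts transitively on those branches. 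Now $E_v(x_j)=E_{x_j}(v)=0$ for $j<n$ by the inductive hypothesis and the symmetry of $Q$, while $E_v(x_n)=Q(\phi_v,\phi_v)-c_0=0$ by the diagonal step; so $E_v$ vanishes on $\gamma$, whence $E_v\equiv0$. (For $n=0$ this is the same argument: $E_o$ is radial and $E_o(o)=0$.) This establishes the formula, and with it the proposition. The main obstacle is the diagonal identity, i.e.\ checking that the displayed sum collapses to $c_0$, together with the bookkeeping in the $H_i$-decomposition of $\phi_v$ (notably that the children block at the origin has $d$ vertices while all others have $d-1$); the uniqueness of $G(\gamma)$-invariant $\alpha$-eigenfunctions with given values on $\gamma$ is a minor additional point.
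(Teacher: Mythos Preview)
Your proof is correct but takes a genuinely different route from the paper's. The paper first observes that $Q$ is $K$-invariant by construction and then uses Lemma~\ref{gkgen}: since $G=\langle K,g\rangle$ for a single element $g$ swapping $o$ with a neighbour $v$, it suffices to check that $g$ preserves $Q$. This is done by splitting $H=\mathrm{Span}\{f,h\}\oplus\mathrm{Span}\{f,h\}^{\perp}$, where $\mathrm{Span}\{f,h\}^{\perp}$ is identified with the functions vanishing at $o$ and $v$; on the two-dimensional piece the check is a short calculation already implicit in Lemma~\ref{alphar}, and on the complement one further splits each $H_n=H_n^o\oplus H_n^v$ according to which side of the edge $\{o,v\}$ supports the function, after which $g$ visibly shifts $H_n^o\to H_{n+1}^v$ and $H_n^v\to H_{n-1}^o$ isometrically.

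Your approach instead proves the spherical-function identity $Q(\phi_v,\phi_w)=c_0\,f(d(v,w))$ directly. This is more work on the diagonal (the telescoping of your displayed sum does go through, via the identities $f(n)=\alpha f(n-1)-\tfrac{1}{d-1}g(n-1)$ and $g(n)=(1-\alpha^2)f(n-1)+\tfrac{\alpha}{d-1}g(n-1)$ with $g(k)=f(k-1)-\alpha f(k)$), but it yields the explicit matrix coefficient formula as a bonus and makes the link with the Gel'fand-pair picture transparent. The paper's argument is shorter and avoids the inductive bookkeeping; yours is more informative. One point you pass over quickly---that $G(\gamma)$ acts transitively on the neighbours of each $x_i$ off $\gamma$, so that a $G(\gamma)$-invariant $\alpha$-eigenfunction is determined by its values on $\gamma$---does require the independence property together with the $2$-transitivity of $K$ on $S_1$ (compare the paper's argument before Lemma~\ref{dimbound}); this is routine but should be made explicit.
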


It is clear from the definition that $Q$ is preserved under $K$. \\

Fix a neighbor $v$ of $o$ and consider $g\in G$ that sends $o$ to $v$ and $v$ to $o$. \\

\begin{lem} \label{gkgen}
The element $g$ and the subgroup $K$ generate the group $G$.
\end{lem}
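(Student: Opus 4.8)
The plan is to show that the subgroup $\langle K, g\rangle$ contains enough elements to act transitively on the vertex set, and then to leverage transitivity together with the vertex stabilizer $K$ to recover all of $G$. The key observation is that $g$ swaps $o$ and its neighbor $v$, so for any $k \in K$, the element $g k g^{-1}$ (or rather $k g k^{-1}$) moves $o$ to $k v$, where $k v$ ranges over all neighbors of $o$ as $k$ ranges over $K$ (since $K$ acts transitively on $S_1$). Hence $\langle K, g\rangle$ contains, for every neighbor $w$ of $o$, an element sending $o$ to $w$; iterating, it contains elements sending $o$ to any vertex of $T$. So $\langle K, g\rangle$ acts transitively on $V$.

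First I would make the transitivity argument precise: given a vertex $u$ at distance $n$ from $o$, write the geodesic $o = u_0, u_1, \dots, u_n = u$ and produce, by induction on $n$, an element $h \in \langle K, g\rangle$ with $h o = u$. For the inductive step, having $h$ with $h o = u_{n-1}$, note $u_n$ is a neighbor of $u_{n-1}$, so $h^{-1} u_n$ is a neighbor of $o$; pick $k \in K$ with $k v = h^{-1} u_n$, and then $h k g k^{-1}$ sends $o$ to $h k g (v) \cdot$—wait, more carefully: $k g k^{-1}$ sends $o = k(o)$ to $k g(o) = \dots$; the cleanest formulation is that for each neighbor $w$ of $o$ there is $g_w \in \langle K, g\rangle$ with $g_w o = w$, namely $g_w = k g k^{-1}$ for suitable $k \in K$ (using that $K$ acts transitively on $S_1$ so some $k$ sends $v$ to $w$ and hence conjugates $g$ to an element swapping $o$ and $w$). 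Then $h g_{h^{-1} u_n} o = h (h^{-1} u_n) = u_n$ completes the induction.

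Second, once $\langle K, g\rangle$ acts transitively on $V$, I would conclude as follows. Let $\gamma \in G$ be arbitrary. Since $\langle K, g\rangle$ is transitive on $V$, there is $h \in \langle K, g\rangle$ with $h o = \gamma o$. Then $h^{-1}\gamma$ fixes $o$, i.e. $h^{-1}\gamma \in G(o) = K \subseteq \langle K, g\rangle$, so $\gamma = h(h^{-1}\gamma) \in \langle K, g\rangle$. Hence $\langle K, g\rangle = G$.

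The main obstacle is essentially bookkeeping rather than a deep difficulty: one must be careful with the conjugation formulas and the direction of the group action (whether $g_w = kgk^{-1}$ for $kv = w$ genuinely swaps $o$ and $w$, which it does because $g$ swaps $o$ and $v$ and conjugation by $k$ transports this to the pair $(ko, kv) = (o, w)$). The only genuinely substantive input is the already-established transitivity of $K$ on $S_1$ (used in the excerpt just above, from the fact that stabilizers of vertices act transitively on $\partial T$); granting that, the argument is a short orbit-stabilizer computation. I would present it cleanly by first isolating the statement "for every neighbor $w$ of $o$, there is $g_w\in\langle K,g\rangle$ with $g_w o=w$," then the induction on distance, then the final orbit-stabilizer step.
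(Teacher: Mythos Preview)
Your argument is correct and follows essentially the same two-step scheme as the paper: first establish that $\langle K,g\rangle$ acts transitively on $V$, then use the orbit--stabilizer observation that any $\gamma\in G$ differs from some element of $\langle K,g\rangle$ by something in $K$. The only cosmetic difference is in how transitivity is obtained: the paper produces a single translation element $g'=gk$ (with $k\in K$ chosen so that $g'$ sends $o\mapsto v$ and $v$ into $S_2$), takes its power $(g')^m$ to reach $S_m$, and then post-composes with $K$ (transitive on $S_m$) to hit the target vertex; you instead walk along the geodesic step by step using conjugates $kgk^{-1}$. Both are short and equally valid.
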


\begin{proof}
Let $g_0\in G$. To show that $g_0$ can be generated with $g$ and $K$, let $w:=g_0(o)$. If we find $g_1$ generated with $g$ and $K$ such that $g_1(o)=w$, we will have $g_1^{-1}g_0(o)=o$ and therefore $g_1^{-1}g_0\in K$, concluding. \\

To find such a $g_1$, let $m:=d(o,w)$. Observe that by composing $g$ with some element of $K$ we can get an element $g'$ that sends $o$ to $v$ and $v$ to a vertex in $S_2$. Then the element $(g')^m$ will send $o$ to a vertex in $S_m$, and by composing with an element of $K$ we can get $g_1$ such that $g_1(o)=w$. \\
\end{proof}

The lemma implies that, to prove Proposition \ref{qpres}, it suffices to show that $Q$ is preserved under $g$. \\

Define the functions $f\in H_0$ and $h\in H_1$ as in the proof of Lemma \ref{alphar}. From the definition of $Q$ we have $Q(f,f)=\frac{d}{d-1}(1-\alpha^2)$ and $Q(h,h)=\frac{d}{d-1}$, since the $d$-dimensional vector $(1,-\frac{1}{d-1},\cdots,-\frac{1}{d-1})$ has Euclidean norm $\frac{d}{d-1}$. Hence $Q(f,f)=(1-\alpha^2)Q(h,h)$, and therefore $g$ preserves the norm of $h$ and the orthogonality of $f$ and $h$: the two conditions were explicitated in the proof of Lemma \ref{alphar}, and it is immediate to see that both hold if $\alpha\in\mathbb{R}$ and $B(f,f)=(1-\alpha^2)B(h,h)$. The norm of $f$ is also preserved, because $\pi(g)f=\alpha f+(1-\alpha^2)h$ and the norm of this function is $$\alpha^2 Q(f,f)+(1-\alpha^2)^2 Q(h,h)=(\alpha^2+(1-\alpha^2))Q(f,f)=Q(f,f).$$

We have shown that $g$ preserves $Q$ in $\mathrm{Span}\{f,h\}$. Observe now that the orthogonal complement of $\mathrm{Span}\{f,h\}$ in $H$ is the space of functions in $H$ with value $0$ at $o$ and $v$. In fact, it is clear that all such functions are orthogonal to $f$ and $h$, and to see that they generate $H$ together with $f$ and $h$, it suffices to take any $t\in H$ and subtract $t(o)f+(t(v)-\alpha t(o))h$ to obtain a function that vanishes at $o$ and $v$. Furthermore, since $g$ swaps $o$ and $v$, the space $\mathrm{Span}\{f,h\}^\perp$ is invariant under $g$. \\

Therefore, to complete the proof of Proposition \ref{qpres}, it suffices to show that $g$ preserves the norm of every function in $\mathrm{Span}\{f,h\}^\perp$. To see this, we further split each $H_n$ ($n\geq 2$) into two orthogonal subspaces: $H_n=H_n^o\oplus H_n^v$, where $H_n^o$ is the space of functions in $H_n$ supported on $\{w\in V: d(o,w)<d(v,w)\}$, and $H_n^v$ is the space of functions in $H_n$ supported on $\{w\in V: d(o,w)>d(v,w)\}$. We can also split $H_1$ as $H_1=H_1^o\oplus\mathrm{Span}\{h\}$. For every $j\in\mathrm{Span}\{f,h\}^\perp$, we call $j_n^o$ its projection to $H_n^o$ and $j_n^v$ its projection to $H_n^v$. We have $$\pi(g)j=\pi(g)\left(\sum_{n\geq 1}{j_n^o}+\sum_{n\geq 2}{j_n^v}\right)=\sum_{n\geq 1}{\pi(g)j_n^o}+\sum_{n\geq 2}{\pi(g)j_n^v}.$$ For every $n\geq 1$ we have $\pi(g)j_n^o\in H_{n+1}^v$, and for every $n\geq 2$ we have $\pi(g)j_n^v\in H_{n-1}^o$. From the definition of the bilinear form $Q$ it is immediately seen that the norm of each $j_n^o$ and $j_n^v$ is preserved, and since all addends are orthogonal it follows that the norm of $j$ is preserved. \\

This completes the proof of Theorem \ref{Main} in the case where $G$ acts transitively on $V$. \\

\section{The non-transitive case}

In this section, we generalize the results of the previous two sections to the case where the action of $G$ on $V$ is not transitive. In particular, this will lead to a complete proof of Theorem \ref{Main}. \\

We recall that, due to Proposition \ref{props}(c), if the action of $G$ on $V$ is not transitive, then it has two orbits, one consisting of vertices $v$ with $d(o,v)$ even and one of those with $d(o,v)$ odd. \\

The tree must be regular or biregular. We may assume that it is biregular of degrees $r,s\geq 3$: the regular case will be the special case $r=s$. We may also assume, without loss of generality, that the base vertex $o$ has degree $r$. \\

We proceed as in Sections 4 and 5, adapting the results to this new case. \\

Proposition \ref{subr} implies that $(\pi,H)$ is equivalent to a subrepresentation of $(\Pi,C)$, where $C$ is the space of functions on $W:=\{v\in V: d(o,v)\text{ even}\}$ that are left-invariant under some compact open subgroup of $G$, and $\Pi$ is the left action of $G$ on this space. \\

We cannot use the standard Laplacian defined in Definition \ref{lapl}, so we introduce:

\begin{dfn}{(2-Laplacian)}
The \textit{2-Laplacian} on the tree $T$ is the operator $L_2: C\rightarrow C$ defined as follows: if $f: W\rightarrow\mathbb{K}$ is a function in $C$, the function $L_2 f$ associates to every vertex $v$ in $W$ the average of the values of $f$ at vertices $w$ satisfying $d(v,w)=2$. \\
\end{dfn}

Observation \ref{calclapl} changes in the following way: the 2-Laplacian of a radial function is again a radial function, with values
\begin{itemize}
    \item $L_2 f(0)=f(2)$,
    \item $L_2 f(2n)=\frac{1}{r(s-1)}f(2n-2)+\frac{s-2}{r(s-1)}f(2n)+\frac{r-1}{r}f(2n+2)$ for all $n\geq 2$. \\
\end{itemize}

Analogously to the transitive case, we can use the Gel'fand pair property of $(G,K)$ to show that there exists a radial function $f\in H$ with $f(0)=1$ such that the space of $K$-invariant functions in $H$ is the space of scalar multiples of $f$, and there exists $\alpha\in\mathbb{K}$ such that $L_2 f=\alpha\cdot f$. \\

We can see that there is one special case, namely $\alpha=1$, where the function $f$ is constant on $W$ and therefore $(\pi,H)$ is the trivial representation. In all other cases, we have $f(2)\neq f(0)$. We wish to deduce an analog of Theorem \ref{sph}. It turns out that the natural analog is always true, except in one particular case. \\

\begin{thm} \label{sph2}
If $\alpha\neq 1$ and $\alpha\neq -\frac{1}{s-1}$, then the natural analog of Theorem \ref{sph} holds: the representation $(\pi,H)$ is equivalent to the left action of $G$ on the space $\{h\in C: L_2 h=\alpha\cdot h\}$, which is spanned by the $G$-translates of $f$, and conversely for every $\alpha\in\mathbb{K}$, $\alpha\neq 1$, $\alpha\neq -\frac{1}{s-1}$, the construction gives an irreducible spherical representation of $G$.
\end{thm}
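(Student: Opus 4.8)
The plan is to follow the proof of Theorem~\ref{sph} line by line, with $L$ replaced by $L_2$ and the balls $B_n(o)$ replaced by the balls $B_{2k}(o)$, and to isolate the two values of $\alpha$ at which this argument breaks down. Normalize $f$ so that $f(0)=1$; then $f(2)=\alpha$, and $f(4)$ is obtained from the recursion for radial $L_2$-eigenfunctions stated above (which is in fact valid also at $n=1$). A short computation with that recursion gives
$$f(0)+(s-2)f(2)-(s-1)f(4)=\frac{r\,(1-\alpha)\big((s-1)\alpha+1\big)}{r-1},$$
which is nonzero exactly because $\alpha\neq 1$ and $\alpha\neq-\tfrac{1}{s-1}$; likewise $f(0)\neq f(2)$ because $\alpha\neq 1$. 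These are the only two places where the hypothesis on $\alpha$ is used. The inclusion $H\subseteq\{h\in C:L_2h=\alpha h\}$ is immediate, just as in the transitive case: $H$ is spanned by the $G$-translates of $f$, the operator $L_2$ commutes with $\Pi$, and the eigenvalue equation passes to linear combinations.

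For the reverse inclusion I would take $h\in C$ with $L_2h=\alpha h$ and build a finite linear combination $F$ of $G$-translates of $f$ agreeing with $h$ on $B_{2k}(o)\cap W$ for $k=0,1,2,\dots$, starting from $F=h(o)f$. At stage $k$, and for each $v\in S_{2k}$ in turn, one corrects $F$ on the set $N_2(v)$ of vertices at distance $2$ from $v$ lying in $S_{2k+2}$ (for $k=0$ this is all of $S_2$) by adding $\sum_{w\in N_2(v)}c_w\,\pi(g_w)f$ with $g_w\,o=w$ and $\sum_w c_w=0$. Since each such $w$ lies in the cone $C_v$ (and, for $v=o$, since $d(o,w)=2$ for every $w\in S_2$), for any vertex $y$ not in $C_v$ one has $d(y,w)=d(y,v)+2$ independently of $w$, so the added function equals $f(d(y,v)+2)\sum_w c_w=0$ at $y$; this guarantees both that the already-fixed ball is left undisturbed and that the corrections performed at different $v\in S_{2k}$ do not interfere (distinct cones are disjoint). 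Since $S_{2k+2}$ is the disjoint union of the $N_2(v)$ over $v\in S_{2k}$, running over all $v$ produces $F=h$ on $B_{2k+2}(o)\cap W$.

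The change of $F$ at $w'\in N_2(v)$ equals $\sum_{w}c_w\,f(d(w,w'))$, and because $d(w,w')$ is $0$, $2$ or $4$ according to whether $w=w'$, or $w\neq w'$ share a neighbour of $v$, or not, this change is governed by the symmetric matrix $M=(f(0)-f(2))I+(f(2)-f(4))P+f(4)J$, where $P$ is block-diagonal with all blocks of size $s-1$ and $J$ is all-ones. On the hyperplane $\{\sum_w c_w=0\}$ the matrix $J$ acts as $0$, so $M$ acts there with eigenvalues $f(0)-f(2)=1-\alpha$ and $f(0)+(s-2)f(2)-(s-1)f(4)$, both nonzero by the displayed identity and the hypotheses on $\alpha$; since the target vector $(h(w')-F(w'))_{w'\in N_2(v)}$ has zero sum (from $L_2F(v)=\alpha F(v)=\alpha h(v)=L_2 h(v)$ together with $F=h$ on the vertices of $B_{2k}(o)\cap W$ occurring in $L_2(\cdot)(v)$), the coefficients $c_w$ exist and are uniquely determined. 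After finitely many stages $F=h$: once $2k$ is large enough that $h$ is left-invariant under $G(B_{2k}(o))$ (Proposition~\ref{props}(a)), both $h$ and the current $F$ are $G(B_{2k}(o))$-invariant solutions of $L_2(\cdot)=\alpha(\cdot)$ agreeing on $B_{2k}(o)\cap W$, hence coincide — this is the non-transitive analogue of the uniqueness step in the proof of Theorem~\ref{sph}.

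Irreducibility, hence the converse direction, then follows verbatim: the construction above shows that every element of $\{h\in C:L_2h=\alpha h\}$ is a finite linear combination of $G$-translates of $f$, so given a nonzero $G$-invariant subspace it suffices to recover $f$ inside it; translating a nonzero element $h$ so that $h(o)\neq 0$ and averaging over $K$ (a finite combination of $K$-translates, since $h$ is $G(B_{2k}(o))$-invariant and $K/G(B_{2k}(o))$ is finite) produces a radial function of value $h(o)\neq 0$ at $o$, hence a nonzero multiple of $f$; and the space is spherical because it contains the radial $f$. The only genuinely new ingredient, and the point that I expect to be the main obstacle, is the eigenvalue computation for $M$ on $\{\sum_w c_w=0\}$: it is precisely the vanishing of the eigenvalue $f(0)+(s-2)f(2)-(s-1)f(4)$ at $\alpha=-\tfrac{1}{s-1}$ (and of $1-\alpha$ at $\alpha=1$, where the representation is trivial) that forces the exclusion of these two values — at $\alpha=-\tfrac1{s-1}$ the translates of $f$ no longer span the full eigenspace, so the correction step cannot be carried out and the statement must be treated separately.
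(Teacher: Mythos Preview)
Your proof is correct and follows the paper's strategy closely: both build $h$ inductively from $G$-translates of $f$ by correcting on successive spheres, and both isolate the same critical quantity $f(0)+(s-2)f(2)-(s-1)f(4)$, whose nonvanishing is equivalent to $\alpha\notin\{1,-\tfrac{1}{s-1}\}$. The one difference is in how the correction step is carried out. The paper does it in two passes: a first naive correction with coefficients $(h(w)-F(w))(f(0)-f(2))^{-1}$ that leaves a block-constant residual error $T(v)=R(v)(f(0)-f(2))^{-1}(f(2)-f(4))$, followed by a second correction with block-constant coefficients to cancel $T(v)$ (this is where $f(0)+(s-2)f(2)-(s-1)f(4)$ must be inverted). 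You instead solve for the coefficients $c_w$ in a single step by inverting the matrix $M$ on the zero-sum hyperplane. Your two eigenvalues $f(0)-f(2)$ and $f(0)+(s-2)f(2)-(s-1)f(4)$ correspond exactly to the paper's two stages --- the eigenspaces being, respectively, the vectors with zero sum on each block and the block-constant vectors with zero total sum --- so the underlying linear algebra is identical; your packaging is simply more economical.
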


\begin{proof}
We can repeat the proof of Theorem \ref{sph}. The only part that needs to be changed (and that will lead to the exception for $\alpha=-\frac{1}{s-1}$) is when we show that the space $\{h\in C: L_2 h=\alpha\cdot h\}$ is generated by the $G$-translates of the radial function $f$. To show this in our case, we fix a function $h$ in the space and try to build it with scalar multiples of translates of $f$. \\

We start with $F=f\cdot h(o)$, which agrees with $h$ at $o$. Now we wish to make it agree with $h$ on $S_2$ as well. For every $w\in S_2$, let $g_w\in G$ send $o$ to $w$. We add to $F$ the function $$\sum_{v\in N(o)}{\sum_{w\in N(v)}{\pi(g_w)f\cdot(f(0)-f(2))^{-1}(h(w)-F(w))}}.$$ The sum of the coefficients is zero, which guarantees that $F$ is unchanged at $o$. For every $v\in N(o)$, let $R(v):=\sum_{w\in N(v)}{(h(w)-F(w))}$. The condition $L_2 h=\alpha\cdot h$, together with $L_2 F=\alpha\cdot F$, implies that $\sum_{v\in N(o)}{R(v)}=0$. \\

For every $v\in N(o)$ and for every $w\in N(v)$, the function at $w$ has been changed by $$f(0)(f(0)-f(2))^{-1}(h(w)-F(w))$$ due to the contribution from $\pi(g_w)f$, by $$\sum_{z\in N(v)\setminus\{w\}}{f(2)(f(0)-f(2))^{-1}(h(z)-F(z))}=f(2)(f(0)-f(2))^{-1}(R(v)-(h(w)-F(w)))$$ due to the contribution from $\pi(g_z)f$ with $z\in N(v)\setminus\{w\}$, and by $$\sum_{x\in N(o)\setminus\{v\}}{\sum_{y\in N(x)}{f(4)(f(0)-f(2))^{-1}(h(y)-F(y))}}=-f(4)(f(0)-f(2))^{-1}R(v)$$ due to the contribution from $\pi(g_y)f$ with $y\in N(x)$ where $x\in N(o)\setminus\{v\}$. \\

All this adds up to $h(w)-F(w)+(f(2)-f(4))(f(0)-f(2))^{-1}R(v)$. \\

To remove the extra term $T(v):=(f(2)-f(4))(f(0)-f(2))^{-1}R(v)$, we subtract from $F$ the function $$\sum_{v\in N(o)}{\sum_{w\in N(v)}{\pi(g_w)f\cdot(f(0)+(s-2)f(2)-(s-1)f(4))^{-1}T(v)}}.$$

We need to assume that $f(0)+(s-2)f(2)-(s-1)f(4)\neq 0$. At every $w$ the value will be changed by $$-f(0)(f(0)+(s-2)f(2)-(s-1)f(4))^{-1}T(v)$$ due to the contribution from $\pi(g_w)f$, by $$-(s-2)f(2)(f(0)+(s-2)f(2)-(s-1)f(4))^{-1}T(v)$$ due to the contribution from $\pi(g_z)f$ with $z\in N(v)\setminus\{w\}$, and by
\begin{align*}
&\sum_{x\in N(o)\setminus\{v\}}{-(s-1)f(4)(f(0)+(s-2)f(2)-(s-1)f(4))^{-1}T(x)}= \\[0.5em]
&=(s-1)f(4)(f(0)+(s-2)f(2)-(s-1)f(4))^{-1}T(v)
\end{align*} due to the contribution from $\pi(g_y)f$ with $y\in N(x)$ where $x\in N(o)\setminus\{v\}$. \\

All this adds up to $-T(v)$, so now the function $F$ agrees with $h$ on $S_2$ as well as at $o$. \\

Like in the proof of Theorem \ref{sph}, we can follow the same procedure to make the two functions also agree on $S_4$ (applying the procedure to all vertices in $S_2$ in place of $o$), then on $S_6$ and so on, getting them to agree everywhere in a finite number of steps since $h$ is invariant under an open compact subgroup of $G$. \\

Therefore, the theorem is proved in the non-transitive case whenever $f(0)+(s-2)f(2)-(s-1)f(4)\neq 0$. As $f(2)=\alpha$ and
\begin{align*}
f(4)&=\frac{r}
{r-1}\left(\left(\alpha-\frac{s-2}{r(s-1)}\right)f(2)-\frac{1}{r(s-1)}f(0)\right)= \\
&=\frac{r(s-1)\alpha^2-(s-2)\alpha-1}{(r-1)(s-1)},
\end{align*}
the condition is true if and only if $1+(s-2)\alpha-\frac{r(s-1)\alpha^2-(s-2)\alpha-1}{(r-1)}\neq 0$, which is equivalent to $(s-1)\alpha^2-(s-2)\alpha-1\neq 0$, i.~e.~$((s-1)\alpha+1)(\alpha-1)\neq 0$, which holds for all $\alpha\neq 1,-\frac{1}{s-1}$. \\
\end{proof}

We are left with the case $\alpha=-\frac{1}{s-1}$. In that case, the function $f$ satisfies $f(2n)=\alpha^n$ for all $n\in\mathbb{N}$. This is checked directly for $n=0,1$ and by induction for $n\geq 2$: we have
\begin{align*}
f(2n)&=\frac{r}
{r-1}\left(\left(\alpha-\frac{s-2}{r(s-1)}\right)f(2n-2)-\frac{1}{r(s-1)}f(2n-4)\right)= \\
&=\frac{r}{r-1}\left(1+\frac{s-2}{r}-\frac{s-1}{r}\right)\alpha^n=\alpha^n. \\
\end{align*}

\begin{thm}
If $\alpha=-\frac{1}{s-1}$, the space $H$ is the subspace of $\{h\in C: L_2 h=\alpha\cdot h\}$ consisting of functions with sum zero on the set of neighbors of any vertex in $V\setminus W$ (i.~e.~any vertex at odd distance from $o$).
\end{thm}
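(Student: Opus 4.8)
The plan is to show that $H$ equals
$H':=\{h\in C: L_2 h=\alpha h \text{ and } \sum_{w\sim u}h(w)=0 \text{ for every } u\in V\setminus W\}$,
where $w\sim u$ means that $w$ is a neighbour of $u$ (every neighbour of an odd vertex lies in $W$, so the condition makes sense for $h\in C$). I would prove the two inclusions separately.

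For $H\subseteq H'$, I would first note that $H'$ is a $G$-invariant subspace of $C$, because both defining conditions are preserved by automorphisms of $T$, which respect graph distances and the bipartition $V=W\sqcup(V\setminus W)$. Since $(\pi,H)$ is irreducible and contains the $K$-invariant radial eigenfunction $f$, it is spanned by the $G$-translates of $f$, so it suffices to check $f\in H'$. We already have $L_2 f=\alpha f$, so only the neighbour-sum condition is at stake: an odd vertex $u$ at distance $2n+1$ from $o$ has one neighbour at distance $2n$ and $s-1$ neighbours at distance $2n+2$, hence, using $f(2k)=\alpha^k f(0)$, one gets $\sum_{w\sim u}f(w)=\alpha^n f(0)+(s-1)\alpha^{n+1}f(0)=\alpha^n f(0)(1+(s-1)\alpha)$, which vanishes precisely because $\alpha=-\frac{1}{s-1}$. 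So $f\in H'$, hence $H\subseteq H'$.

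For $H'\subseteq H$, the plan is to take $h\in H'$ and build it as a finite linear combination of $G$-translates of $f$, re-running the construction in the proof of Theorem~\ref{sph2} but localized successively around the vertices of the spheres $S_{2k}$. Start from $F:=h(o)f(0)^{-1}f$, which agrees with $h$ at $o$, lies in $H'$, and is $K$-invariant. Inductively assume that $F$ is a finite combination of translates $\pi(g_w)f$ with $w\in B_{2k}(o)\cap W$ and $g_w o=w$ — so that $F\in H'$ and $F$ is $G(B_{2k}(o))$-invariant — and that $F$ agrees with $h$ on $B_{2k}(o)\cap W$. For each $v\in S_{2k}$ I would correct $F$ on the vertices of $S_{2k+2}$ lying in the cone $C_v$ exactly as in Theorem~\ref{sph2}. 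The decisive point, which is what makes the case $\alpha=-\frac{1}{s-1}$ go through, is that $h-F\in H'$ vanishes on $B_{2k}(o)\cap W$: applying the neighbour-sum condition at an odd vertex $v'\in S_{2k+1}$, whose parent lies in $B_{2k}(o)\cap W$, yields $\sum_{w\in N(v')}(h-F)(w)=0$. Consequently every coefficient sum appearing in the correction is zero, so on one hand the correction leaves $F$ unchanged on $B_{2k}(o)\cap W$, and on the other hand the obstructing term — whose removal in the proof of Theorem~\ref{sph2} required dividing by $f(0)+(s-2)f(2)-(s-1)f(4)$, which equals $0$ exactly when $\alpha=-\frac{1}{s-1}$ — is identically zero and need not be removed. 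After the step, $F$ agrees with $h$ on $B_{2k+2}(o)\cap W$ and is still a combination of translates $\pi(g_w)f$ with $w\in B_{2k+2}(o)\cap W$, so the induction continues. Finally, since $h$ is $G(B_n(o))$-invariant for some $n$ (Proposition~\ref{props}(a)), once $2k\ge n$ both $h$ and $F$ are $G(B_{2k}(o))$-invariant $L_2$-eigenfunctions with eigenvalue $\alpha$ agreeing on $B_{2k}(o)\cap W$, hence they coincide, exactly as in the termination step of Theorems~\ref{sph} and~\ref{sph2}; thus $h\in H$.

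I expect the one genuinely delicate point to be precisely the vanishing of the obstructing correction, which is exactly what the neighbour-sum condition defining $H'$ is tailored to provide. Everything else — the $G$-invariance of $H'$, the membership $f\in H'$, the fact that $F$ stays inside $H'$ throughout the induction, and the termination of the procedure — is a routine adaptation of arguments already used for Theorems~\ref{sph} and~\ref{sph2}.
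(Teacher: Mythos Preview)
Your proposal is correct and follows essentially the same approach as the paper: both show $f\in H'$ via the computation $f(2k)=\alpha^k f(0)$ and the identity $1+(s-1)\alpha=0$, note that $H'$ is $G$-invariant, and then rerun the construction of Theorem~\ref{sph2}, observing that the obstructing term $T(v')=R(v')(f(0)-f(2))^{-1}(f(2)-f(4))$ vanishes because the neighbour-sum condition at the odd vertex $v'$ forces $R(v')=\sum_{w\in N(v')}(h-F)(w)=0$ once $h-F$ already vanishes at the parent of $v'$. The paper phrases this last step as $R(v')=F(\bar{v}')-h(\bar{v}')=0$, which is exactly your argument.
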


\begin{proof}
As $f(2n)=\alpha^n$ for all $n$, it is easy to see that $f$ is in the space described in the statement: if a vertex is at distance $2k-1$ from $o$, the function will have value $\alpha^{k-1}$ at one neighbor and $\alpha^k$ at all other neighbors, adding up to $\alpha^{k-1}(1+(s-1)\alpha)=0$. It is also easy to see that the space is preserved under the action of $G$. \\

We only need to show that the $G$-translates of $f$ span our space. We can follow the exact same proof of Theorem \ref{sph2}, up to the point where we are left with the term $T(v)=(f(2)-f(4))(f(0)-f(2))^{-1}R(v)$. We cannot eliminate this term with the procedure that follows in the proof, because it would involve multiplying functions by the inverse of zero. However, we can show that in our case there is nothing to eliminate, as we have $R(v)=0$. \\

Recall that, in fact, $R(v)$ was the sum of $h(w)-F(w)$ where $w$ varies in the set $N(v)$. If we let $\overline{v}$ be the unique neighbor of $v$ which is not in $N(v)$ (i.~e.~the one on the path to $o$), since the sum of $h-F$ over $N(v)\cup\{\overline{v}\}$ is zero we have $R(v)=F(\overline{v})-h(\overline{v})$. The functions $F$ and $h$ already agree at $\overline{v}$, therefore $R(v)=0$. \\
\end{proof}

We can proceed to adapting the proof of Theorem \ref{Main} to the non-transitive case. We first assume $\alpha\neq -\frac{1}{s-1}$; the remaining case will be treated in the end. \\

We split $H=\{h\in C: L_2 h=\alpha\cdot h\}$ into a direct sum of $K$-invariant subspaces: $H=\bigoplus_{n\in\mathbb{N}}{H_{n}}$. Again, $H_0$ is $1$-dimensional (spanned by a radial $f$). Then, for $n\geq 1$ we define $\widetilde{H}_n$ as the subspace of $H$ consisting of functions that are invariant under $G(B_n(o))$. In this way, a function in $\widetilde{H}_n$ will be determined by its values in $S_{2k}$ if $n\in\{2k-1,2k\}$, and in the case $n=2k-1$ the value will have to be constant on $N(w)$ for every $w\in S_{2k-1}$. We can define, for every $k\geq 1$:

$$H_{2k-1}:=\left\{h\in\widetilde{H}_{2k-1}: \sum_{w\in N(v)}{\sum_{x\in N(w)}{h(x)}}=0\ \forall v\in S_{2k-2}\right\};$$

$$H_{2k}:=\left\{h\in\widetilde{H}_{2k}: \sum_{x\in N(w)}{h(x)}=0\ \forall w\in S_{2k-1}\right\}.$$ 

In this way, we have $\widetilde{H}_n=\bigoplus_{i=0}^{n}{H_i}$ for every $n$, and $H=\bigoplus_{n\in\mathbb{N}}{H_{n}}$. The dimensions of the subspaces $H_n$ are as follows:

\begin{itemize}
    \item $\dim(H_0)=1$,
    \item $\dim(H_1)=r-1$,
    \item $\dim(H_2)=r(s-2)$,
    \item $\dim(H_{2k-1})=r(r-2)(r-1)^{k-2}(s-1)^{k-1}$ for $k\geq 2$,
    \item $\dim(H_{2k})=r(r-1)^{k-1}(s-2)(s-1)^{k-1}$ for $k\geq 2$. \\
\end{itemize}

Like in the transitive case, we prove that $H_1$ is irreducible and that for every $n\geq 2$ each $K$-irreducible subspace of $H_n$ has dimension at least $\vert S_{n-1}\vert$. The proof is exactly the same if $n$ is even, while if $n=2k-1$ is odd, we consider, in place of functions on $S_{2k-1}$, functions on $S_{2k}$ that are constant on $N(w)$ for every $w\in S_{2k-1}$ (as is the case for functions in $H_{2k-1}$). \\

Since for every $k\geq 1$ we have $\vert S_{2k-1}\vert=r(r-1)^{k-1}(s-1)^{k-1}$ and $\vert S_{2k}\vert=r(r-1)^{k-1}(s-1)^k$, we always have $\vert S_{n-1}\vert>\dim{H_{n-1}}$. Hence, Corollary \ref{dec} holds, and therefore if the representation preserves a sesquilinear form $B$ then it must be the sum (for $n\in\mathbb{N}$) of $K$-invariant sesquilinear forms $B_n$ on $H_n$. \\

\begin{lem} \label{alphar2}
If the representation preserves the nondegenerate sesquilinear form $B$ (which we assume to be antilinear in the first variable and linear in the second), then $\alpha\in\mathbb{R}$.
\end{lem}

\begin{proof}
Let $f$ be the unique function in $H_0$ that satisfies $f(0)=1$. It satisfies $f(2)=\alpha$ and $f(4)=\frac{r(s-1)\alpha^2-(s-2)\alpha-1}{(r-1)(s-1)}$. Let $h$ be a function in $H_1\oplus H_2$ such that there exist $v\in N(o)$ and $x\in N(v)$ with $h(x)=1+\alpha$, $h(w)=\alpha$ for all $w\in N(v)\setminus\{x\}$, and $h(z)=-\frac{1+(s-1)\alpha}{(r-1)(s-1)}$ for all $z\in S_2\setminus N(v)$. \\

If we consider $g\in G$ that sends $o$ to $x$ and $x$ to $o$, we have $\pi(g)f=f\cdot\alpha+h\cdot(1-\alpha)$ and $\pi(g)h=f\cdot(1+\alpha)-h\cdot\alpha$. The former implies $$B(h,h)\vert 1-\alpha\vert^2=B(f,f)\left(1-\vert\alpha\vert^2\right).$$ Observe that $B(f,f)\neq 0$, because otherwise the form would be degenerate ($f$ is orthogonal to all functions in $\bigoplus_{n\geq 1}{H_n}$). \\

Since $B(f,h)=0$, it must be $B(\pi(g)f,\pi(g)h)=0$. This means that $B(f\cdot\alpha+h\cdot(1-\alpha),f\cdot(1+\alpha)-h\cdot\alpha)=0$, and therefore $$B(f,f)\overline{\alpha}(1+\alpha)-B(h,h)(1-\overline{\alpha})\alpha=0.$$ Multiplying by $1-\alpha$ on the left and using the equality above, recalling that $B(f,f)$ and $B(h,h)$ are in $\mathbb{R}$, we get $$B(f,f)(1-\alpha)\overline{\alpha}(1+\alpha)=B(h,h)\vert 1-\alpha\vert^2\alpha=B(f,f)\left(1-\vert\alpha\vert^2\right)\alpha.$$ We can divide by $B(f,f)$ since it is nonzero. Developing the two expressions, we get $$\overline{\alpha}-\vert\alpha\vert^2\alpha=\alpha-\vert\alpha\vert^2\alpha$$ and therefore $\overline{\alpha}=\alpha$, which means $\alpha\in\mathbb{R}$. \\
\end{proof}

Lemma \ref{alphar2} implies that Corollary \ref{tensor} also holds in the non-transitive case. \\

Like in the transitive case, we finish the proof of Theorem \ref{Main} by presenting, for every $\alpha\in\mathbb{R}\setminus\{1\}$, an explicit nondegenerate bilinear form preserved under $G$. In the non-transitive case, the bilinear form will be of index $0$, $1$ or $\infty$: in the first two cases, Proposition \ref{uniqueform} applies directly and excludes that $G$ can preserve a form of index $>1$, while in the third case we can argue by contradiction that if $G$ preserved a form of finite index $p$ it could not preserve (again by Proposition \ref{uniqueform}) a form of index $\infty$. \\

For every $n\in\mathbb{N}$, a function in $H_n$ is uniquely determined by its values in $S_{\lceil\frac{n}{2}\rceil}$. We define the bilinear form $Q_n$ on $H_n$ in such a way that it coincides with the restriction of the standard scalar product on the space of functions $S_{\lceil\frac{n}{2}\rceil}\rightarrow\mathbb{R}$ to the subspace consisting of those that give rise to a function in $H_n$. \\

We define the bilinear form $Q$ on $H$ as follows: $$Q:=(1-\alpha)Q_0+\frac{r-1}{r(1+(s-1)\alpha)}\sum_{k\geq 1}{Q_{2k-1}}+\sum_{k\geq 1}{Q_{2k}}.$$

This bilinear form is nondegenerate with index $0$ if $-\frac{1}{s-1}<\alpha<1$, index $1$ if $\alpha>1$, and index $\infty$ if $\alpha<-\frac{1}{s-1}$. \\

We need to check that this bilinear form is preserved under $G$. Let $v\in N(o)$ and let $w\in N(v)$. Consider $g\in G$ which sends $o$ to $w$ and $w$ to $o$. The proof of Lemma \ref{gkgen} can be easily adapted to show that $g$ and $K$ generate $G$. As the bilinear form $Q$ is clearly preserved under $K$, it suffices to show that it is preserved under $g$. \\

Let $D:=\{o\}\cup N(v)$ be the set of neighbors of $v$. The space $H$ can be split into two subspaces: the space $H_D$ of functions that are invariant under $G(D)$, and the space $H_D^\perp$ of functions that are zero on $D$. We have $H=H_D\oplus H_D^\perp$: the intersection is easily seen to be trivial, and for every $h\in H$ there is a $G(D)$-invariant function that agrees with it on $D$ (built with translates of the radial function $f$ by elements of $G$ that fix $v$) and whose difference with $h$ will thus be zero on $D$. The notation $H_D^\perp$ is not casual, as it is easy to see that every function in $H_D$ is orthogonal to every function in $H_D^\perp$ (with respect to the bilinear form $Q$). It is also easy to see that the two subspaces are preserved under $g$ ($g$ preserves $D$), therefore it suffices to show that $g$ preserves $Q$ on $H_D$ and on $H_D^\perp$. \\

We first consider $H_D$. We split it into the direct sum $H_D^{(1)}\oplus H_D^{(2)}\oplus H_D^{(3)}$, where: $$H_D^{(1)}:=\left\{h\in H_D: h(o)=h(w),\ h\text{ constant on }D\setminus\{o,w\}\right\};$$ $$H_D^{(2)}:=\left\{j\in H_D: j(o)+j(w)=0,\ j(x)=0\ \forall x\in D\setminus\{o,w\}\right\};$$ $$H_D^{(3)}=\left\{l\in H_D: l(0)=l(w)=0,\ \sum_{x\in D\setminus\{o,w\}}{l(x)}=0\right\}.$$ \\

The three subspaces are preserved by $g$, as $g$ swaps $o$ and $w$. \\

Let $h,j,l$ be arbitrary functions in $H_D^{(1)},H_D^{(2)},H_D^{(3)}$ respectively. We have $\pi(g)h=h$, therefore $g$ preserves $Q$ on $H_D^{(1)}$. We also have $\pi(g)j=-j$, therefore $g$ preserves $Q$ on $H_D^{(2)}$. The functions $l$ and $\pi(g)l$ are both in $H_2$ with the same norm, therefore $g$ preserves $Q$ on $H_D^{(3)}$. \\

We claim that the spaces $H_D^{(1)}$, $H_D^{(2)}$, $H_D^{(3)}$ are mutually orthogonal with respect to the bilinear form $Q$, which will allow us to conclude that $g$ preserves $Q$ on $H_D$. For this, we need to show that $h,j,l$ are mutually orthogonal. \\

First, we see that $l$ is orthogonal to both $h$ and $j$, because its restriction to $S_2$ is supported (with sum zero) on $N(v)\setminus\{w\}$ while $h$ and $j$ are constant on this set.\\

Now, we need to show that $Q(h,j)=0$. We may assume (up to multiplying by a scalar) that $j(o)=1$, which implies $j(w)=-1$. Let $a$ be the value of $h$ at $o$ (or $w$) and let $b$ be its value at any $x\in D\setminus\{o,w\}$. The function $h$ is the sum of the following:

\begin{itemize}
    \item a function in $H_0$ with value $a$ at $o$,
    \item a function in $H_1$ with value $\left(\frac{1}{s-1}-\alpha\right)a+\frac{s-2}{s-1}b$ at any $x\in N(v)$ and value $-\frac{1}{r-1}\left(\left(\frac{1}{s-1}-\alpha\right)a+\frac{s-2}{s-1}b\right)$ at any $x\in S_2\setminus N(v)$,
    \item a function in $H_2$ supported on $N(v)$ with value $\frac{s-2}{s-1}(a-b)$ at $w$ and value $-\frac{1}{s-1}(a-b)$ at any $x\in N(v)\setminus\{w\}$. \\
\end{itemize}

The function $j$ is the sum of the following:

\begin{itemize}
    \item a function in $H_0$ with value $1$ at $o$,
    \item a function in $H_1$ with value $-\frac{1}{s-1}-\alpha$ at any $x\in N(v)$ and value $\frac{1}{r-1}\left(\frac{1}{s-1}+\alpha\right)$ at any $x\in S_2\setminus N(v)$,
    \item a function in $H_2$ supported on $N(v)$ with value $-\frac{s-2}{s-1}$ at $w$ and $\frac{1}{s-1}$ at any $x\in N(v)\setminus\{w\}$. \\
\end{itemize}

We have:

\begin{align*}
Q(h,j)&=(1-\alpha)a+\frac{r-1}{r(1+(s-1)\alpha)}(s-1)\left(\left(\frac{1}{s-1}-\alpha\right)a+\frac{s-2}{s-1}b\right)\left(-\frac{1}{s-1}-\alpha\right)+ \\
&+\frac{r-1}{r(1+(s-1)\alpha)}(r-1)(s-1)\left(-\frac{1}{r-1}\left(\left(\frac{1}{s-1}-\alpha\right)a+\frac{s-2}{s-1}b\right)\right)\frac{1}{r-1}\left(\frac{1}{s-1}+\alpha\right)+ \\
&+\frac{s-2}{s-1}(a-b)\left(-\frac{s-2}{s-1}\right)+(s-2)\left(-\frac{1}{s-1}(a-b)\right)\frac{1}{s-1}= \\[1em]
&=(1-\alpha)a-\frac{r-1}{r(1+(s-1)\alpha)}\left(\left(1-(s-1)\alpha\right)a+(s-2)b\right)\left(\frac{1}{s-1}+\alpha\right)+ \\
&-\frac{1}{r(1+(s-1)\alpha)}\left(\left(1-(s-1)\alpha\right)a+(s-2)b\right)\left(\frac{1}{s-1}+\alpha\right)-\frac{(s-2)^2+(s-2)}{(s-1)^2}(a-b)= \\[1em]
&=(1-\alpha)a-\frac{1}{s-1}\left(\left(1-(s-1)\alpha\right)a+(s-2)b\right)-\frac{s-2}{s-1}(a-b)= \\[1em]
&=\frac{1}{s-1}(((1-\alpha)(s-1)-(1-(s-1)\alpha)-(s-2))a-((s-2)-(s-2))b)=0. \\
\end{align*}

Therefore, we have shown that $g$ preserves $Q$ on $H_D$. We now show that $g$ preserves $Q$ on $H_D^\perp$, in a way similar to the end of the proof of Proposition \ref{qpres}. \\

We split each $H_n$ ($n\geq 3$) in three orthogonal subspaces: $H_n=H_n^o\oplus H_n^x\oplus H_n^w$, where $H_n^o$ is the space of functions in $H_n$ supported on $\{z\in W: d(o,z)<d(w,z)\}$ (which means that the path from $z$ to $v$ meets $D$ at $o$), $H_n^x$ is the space of functions in $H_n$ supported on $\{z\in W: d(o,z)=d(w,z)\}$ (which means that the path from $z$ to $v$ meets $D$ at a vertex other than $o$ and $w$), and $H_n^w$ is the space of functions in $H_n$ supported on $\{z\in W: d(o,z)>d(w,z)\}$ (which means that the path from $z$ to $v$ meets $D$ at $w$). We can also define $H_1^o$ as the space of functions in $H_1$ supported on $\{z\in W: d(o,z)<d(v,z)\}$, and $H_2^o$ in the same way. Observe that a function in $H_D^\perp$ lies in $H_1^o\oplus H_2^o\oplus\bigoplus_{n\geq 3}{H_n}$. \\

For every $q\in H_D^\perp$, we call $q_n^o$ its projection to $H_n^o$, $q_n^x$ its projection to $H_n^x$, and $q_n^w$ its projection to $H_n^w$. We have $$\pi(g)q=\pi(g)\left(\sum_{n\geq 1}{q_n^o}+\sum_{n\geq 3}{q_n^x}+\sum_{n\geq 3}{q_n^w}\right)=\sum_{n\geq 1}{\pi(g)q_n^o}+\sum_{n\geq 3}{\pi(g)q_n^x}+\sum_{n\geq 3}{\pi(g)q_n^w}.$$ For every $n\geq 1$ we have $\pi(g)q_n^o\in H_{n+2}^w$, for every $n\geq 3$ we have $\pi(g)q_n^x\in H_n^x$, and for every $n\geq 3$ we have $\pi(g)q_n^w\in H_{n-2}^o$. From the definition of the bilinear form $Q$ it is then immediate to see that the norm of each $q_n^o$, $q_n^x$ and $q_n^w$ is preserved, and as all the addends are orthogonal, it follows that the norm of $q$ is preserved. This completes the proof of Theorem \ref{Main} in the non-transitive case if we assume $\alpha\neq -\frac{1}{s-1}$. \\

Therefore, we are only left with the case $\alpha=-\frac{1}{s-1}$. \\

We claim that, in this case, all spaces $H_{2k-1}$ ($k\geq 1$) are trivial. Recall that if $\alpha=-\frac{1}{s-1}$, every function in $H$ has sum zero on the set of neighbors of any vertex at odd distance from $o$. Recall also that a function in $H_{2k-1}$ is zero on $B_{2k-2}(o)$ and constant on $N(v)$ for every $v\in S_{2k-1}$. As the sum over neighbors of $v$ has to be zero, the sum over $N(v)$ has to be zero, which means that the function is identically zero on $N(v)$. This shows that the function is identically zero on $S_{2k}$. Then, since the function is invariant under $G(B_{2k-1})$, it has to be zero everywhere. \\

For every even $n$ we can define the bilinear form $Q_n$ on $H_n$ as in the case $\alpha\neq -\frac{1}{s-1}$, then we can define the bilinear form $Q$ on $H$ as follows: $$Q:=(1-\alpha)Q_0+\sum_{k\geq 1}{Q_{2k}}.$$ \\

This form has index $0$ (it is positive definite). To verify that it is preserved under $G$, the proof is the same as in the case $\alpha\neq -\frac{1}{s-1}$, only the verification that $g$ preserves $Q$ on $H_D$ must be adapted. \\

We can split $H_D$ into the same direct sum $H_D^{(1)}\oplus H_D^{(2)}\oplus H_D^{(3)}$. The only difference is that, while in the case $\alpha\neq -\frac{1}{s-1}$ the space $H_D^{(1)}$ was $2$-dimensional (it was the space of functions in $H_D$ with a certain value $a$ at $o$ and $w$ and a certain value $b$ at every $x\in D\setminus\{o,w\}$), in our case it is only $1$-dimensional: the two values $a$ and $b$ have to satisfy $2a+(s-2)b=0$. \\

The proof that $g$ preserves the three subspaces, that it preserves $Q$ on each of them, and that $H_D^{(3)}$ is orthogonal to both of the others works exactly as in the case $\alpha\neq -\frac{1}{s-1}$. The only part to change is where we check that $H_D^{(1)}$ and $H_D^{(2)}$ are orthogonal. \\

We use the same notation as before and fix arbitrary $h\in H_D^{(1)}$, $j\in H_D^{(2)}$. We let $a$ be the value of $h$ at $o$ and $w$, $b$ be the value of $h$ at any $x\in D\setminus\{o,w\}$ (they must satisfy $2a+(s-2)b=0$), and we assume without loss of generality $j(o)=1$ and $j(w)=-1$. We have

\begin{align*}
Q(h,j)&=(1-\alpha)a+\frac{s-2}{s-1}(a-b)\left(-\frac{s-2}{s-1}\right)+(s-2)\left(-\frac{1}{s-1}(a-b)\right)\frac{1}{s-1}= \\[1em]
&=\left(1+\frac{1}{s-1}\right)a-\frac{s-2}{s-1}(a-b)=\frac{1}{s-1}(2a+(s-2)b)=0. \\
\end{align*}

This concludes the proof for the case $\alpha=-\frac{1}{s-1}$. Theorem \ref{Main} is now fully proved. \\

\appendix
\section{Appendix}

In this appendix, we prove that, in the situation of Theorem \ref{Main}, namely in presence of a preserved strongly nondegenerate sesquilinear form of finite index, the irreducible representation $(\pi,H)$ is admissible, and that this implies that its restriction to the subspace $H^{(\infty)}$ of smooth vectors is irreducible in the algebraic sense, as we claimed at the end of Section 2. \\

First, we recall the classical definition of admissibility, together with a well-known equivalence:

\begin{dfn}
    Let $K$ be a maximal compact subgroup of $G$. The representation $(\pi,H)$ is \textit{admissible} if every irreducible representation of $K$ appears with finite multiplicity in the restriction of $(\pi,H)$ to $K$. This is equivalent to the property that $\dim{H^U}<\infty$ for every open compact $U<G$. \\
\end{dfn}

We start with a continuous topologically irreducible $\mathbb{K}$-representation $(\pi,H)$, with $\mathbb{K}\in\{\mathbb{R},\mathbb{C},\mathbb{H}\}$. We can assume $H$ to be infinite-dimensional; otherwise, what we want to prove is trivial. We consider the algebra $C_c^\infty(G)$, defined as the algebra of continuous compactly supported functions on $G$ that are locally constant, i.~e.~invariant under some open compact subgroup of $G$. We consider complex functions if $\mathbb{K}=\mathbb{C}$ and real functions in the other two cases. The algebra $C_c^\infty(G)$ acts irreducibly on $H$, meaning it does not preserve any non-trivial closed $\mathbb{K}$-subspace. \\

\begin{lem} \label{irred}
    Let $U<G$ be an open compact subgroup, and consider the algebra $E(U)\subseteq C_c^\infty(G)$ consisting of functions that are left- and right-invariant with respect to $U$. This algebra acts irreducibly on $H^U$.
\end{lem}

\begin{proof}
    First, observe that $E(U)=\chi_U * C_c^\infty(G) * \chi_U$, where $\chi_U$ denotes the indicator function of $U$ and $*$ denotes the convolution. The operator $\chi_U$ is, up to a constant (the Haar measure of $U$), the averaging operator over $U$. Therefore, it is clear that every operator in $E(U)$ has image contained in $H^U$. Moreover, if $W\subseteq H^U$ is $E(U)$-invariant and nonzero, then by irreducibility $C_c^\infty(G)W$ is dense in $H$, therefore $\chi_U C_c^\infty(G)W=\chi_U C_c^\infty(G)\chi_U W=E(U)W\subseteq W$ is dense in $H^U$. \\
\end{proof}

\begin{obs} \label{algirred}
    if $H^U$ is finite-dimensional, then density of an invariant subspace $W$ implies that $W=H^U$. Therefore, the algebra $E(U)$ acts on $H^U$ with no invariant subspace except $\{0\}$ and $H^U$. If the representation $(\pi,H)$ is admissible, then this is the case for every open compact $U<G$. This implies that the restriction of the representation to $H^{(\infty)}$ is irreducible in the algebraic sense: for every $v_1,v_2\in H^{(\infty)}$ there exist open compact $U_1,U_2<G$ such that $v_1\in H^{U_1}$ and $v_2\in H^{U_2}$; if we consider the open compact $U=U_1\cap U_2$ we have $v_1,v_2\in H^U$, therefore if we denote by $v_1\mathbb{K}$ the $\mathbb{K}$-span of $v_1$ we have $v_2\in E(U) v_1\mathbb{K}\subseteq C_c^\infty(G) v_1\mathbb{K}$. See also (\cite{renard}, III.1.5) and (\cite{carterwhite}, Proposition 2.12). \\
\end{obs}

Due to Observation \ref{algirred}, we are only left to prove admissibility of topologically irreducible representations. First, we will restrict our attention to the case $\mathbb{K}=\mathbb{C}$, then we will show how admissibility in this case implies admissibility in the other cases. \\

\begin{rmk}
    It is already known that all irreducible unitary $\mathbb{C}$-representations of $G$ are admissible. This was already implicit in \cite{olsh2}, and is further explained at the end of Chapter III.3 in \cite{talanebbia} in the case $G=\mathrm{Aut}(T)$ for a regular tree $T$, following the classification. In our generality, unitary representations are classified in \cite{amann}, and one can apply the same argument to deduce admissibility. \\
\end{rmk}

We show a general result about existence of admissible representations which, following the above remark, can be applied in our case.

\begin{prp} \label{exadm}
    Let $G$ be a locally compact group. Let $C_c(G)$ be the algebra of continuous compactly supported complex functions on $G$. Assume that all continuous irreducible unitary $\mathbb{C}$-representations of $G$ are admissible. Then for every $h\in C_c(G)$ there exists a continuous irreducible admissible unitary representation $\phi$ of $G$ such that $\phi(h)\neq 0$.
\end{prp}

\begin{proof}
     Since $h\in C_c(G)$, we have $h\in L^1(G)$ and therefore $h\in\mathbb{C}^*(G)$, where $\mathbb{C}^*(G)$ is the $\mathbb{C}^*$-algebra of $G$ as defined in (13.9) of \cite{dixmier}. By (\cite{kadrin}, Corollary 10.2.4) there exists an irreducible representation $\Phi$ of $\mathbb{C}^*(G)$ such that $\Phi(h)\neq 0$. This representation comes from the Gelfand-Naimark-Sagel (GNS) construction and is nondegenerate. By (\cite{dixmier}, 13.9.3) there exists a bijective correspondence between nondegenerate representations of $\mathbb{C}^*(G)$ and continuous unitary representations of $G$. Moreover, this correspondence preserves irreducibility (\cite{dixmier}, 13.3.5). This means that the representation $\Phi$ is associated with a continuous irreducible unitary representation of $G$; then our hypothesis implies that this representation is also admissible. \\
\end{proof}

We turn back to our algebra $C_c^{\infty}(G)$, which acts irreducibly on $H$. We know that the representation $(\pi,H)$ preserves a continuous strongly nondegenerate sesquilinear form on $H$, and we can associate to this sesquilinear form an adjoint operation $\dag$ on $H$. The algebra $\pi(C_c^\infty(G))$ is closed under the $\dag$-operation, because if $f\in C_c^{\infty}(G)$ we can define $\hat{f}(g)=f(g^{-1})$ and we have $\pi(\hat{f})=\pi(f)^\dag$. We can now show a general result that allows to deduce that $\pi(C_c^\infty(G))$ is dense in the space of bounded linear operators on $H$. \\

\begin{thm} \label{dense}
    Let $H$ be a complex infinite-dimensional Hilbert space equipped with a continuous strongly nondegenerate sesquilinear form of finite index, which generates an adjoint operation $\dag$. Let $\mathcal{A}$ be an algebra of bounded linear operators on $H$. Suppose that $\mathcal{A}$ is closed under the $\dag$-operation and that it acts irreducibly on $H$, meaning that there are no nontrivial $\mathcal{A}$-invariant closed subspaces. Then $\mathcal{A}$ is dense in the space $L(H)$ of bounded linear operators on $H$, with respect to the strong operator topology. \\
\end{thm}

We first observe that, if the sesquilinear form was positive definite, then this would be a well-known classical result, following from the Schur Lemma and the Von Neumann double commutant theorem (\cite{takesaki}, Chapter II, Theorem 3.9). We will prove Theorem \ref{dense} by first showing that the Schur Lemma can also be applied in this case and then adapting a special case of the double commutant theorem. \\

\begin{prp}
    In the setting of Theorem \ref{dense}, let $\mathcal{A}'\subset L(H)$ be the commutator of $\mathcal{A}$, i.~e.~the algebra of operators that commute with all operators in $\mathcal{A}$. Then $\mathcal{A}'$ only consists of scalar multiples of the identity.
\end{prp}

\begin{proof}
     The algebra $\mathcal{A}'$ is closed under the $\dag$-operation, because $h\in L(H)$ commutes with $f\in\mathcal{A}$ if and only if $h^\dag$ commutes with $f^\dag$, and $\mathcal{A}$ is closed under the $\dag$-operation. \\
    
    Let $h\in\mathcal{A}'$. We want to show that $h$ is a scalar multiple of the identity. If $h$ is self-adjoint for the $\dag$-operation, then by Theorem 12.1' in \cite{iohkrlan} there exists a finite-dimensional nonnegative subspace of $H$ that is preserved by $h$, which implies that $h$ has at least one nonnegative eigenvector $v\neq 0$. The eigenspace to which it belongs is closed and $\mathcal{A}$-invariant, because if $hv=\lambda v$ for $\lambda\in\mathbb{C}$ then for every $f\in\mathcal{A}$ we have $hfv=fhv=f(\lambda v)=\lambda fv$. Therefore, since $\mathcal{A}$ acts irreducibly on $H$, the eigenspace coincides with $H$ and $h=\lambda Id$. \\

    For a general $h$ we can write $h=\frac{h+h^\dag}{2}-i\frac{ih+(ih)^\dag}{2}$. The operators $\frac{h+h^\dag}{2}$ and $\frac{ih+(ih)^\dag}{2}$ are in $\mathcal{A}'$ and self-adjoint for the $\dag$-operation, hence scalar multiples of the identity by the above argument. This completes the proof. \\
\end{proof}

\begin{prp}
    In the setting of Theorem \ref{dense}, the algebra $\mathcal{A}$ is dense in its double commutator $\mathcal{A}''$ (defined as the commutator of the commutator of $\mathcal{A}$) in the strong operator topology. \\
\end{prp}

\begin{proof}
    Let $h\in\mathcal{A}''$. We have to show that $h$ can be approximated by elements of $\mathcal{A}$ in the strong operator topology. This means that for every finite sequence $(v_1,\ldots,v_n)$ of vectors in $H$ and for every $\epsilon>0$ there exists $f\in\mathcal{A}$ such that $||hv_i-fv_i||<\epsilon$ for $1\leq i\leq n$. \\

    For every algebra $\mathcal{B}\subseteq L(H)$, we call $\mathcal{B}^n\subseteq L(H^n)$ the algebra defined by the diagonal embedding of $\mathcal{B}$, meaning that a general element of $\mathcal{B}^n$ sends every $(v_1,\ldots,v_n)$ to $(bv_1,\ldots,bv_n)$ for some $b\in\mathcal{B}$. \\
    
    We have $(\mathcal{A}^n)''=(\mathcal{A}'')^n$ (\cite{takesaki}, Chapter II, Corollary 3.6). Therefore, for $h\in\mathcal{A}''$, the operator $\widetilde{h}: (v_1,\ldots,v_n)\mapsto (hv_1,\ldots,hv_n)$ is in $(\mathcal{A}^n)''$. \\

    The Hilbert space $H^n$ is naturally equipped with a strongly nondegenerate sesquilinear form coming from $H$, whose index is $n$ times the index of the form on $H$. It is immediate to see that $\mathcal{A}^n$ is closed under taking adjoints with respect to this form (we will denote again by $\dag$ this operation). \\
    
     Let $\overline{v}=(v_1,\ldots,v_n)$ be fixed. Let $W\subseteq H^n$ be the closure of $\mathcal{A}^n\overline{v}$. We claim that $W$ is nondegenerate for the sesquilinear form on $H^n$. If that were not the case, we would have $W\cap W^\perp\neq\{0\}$, where $W^\perp$ denotes the orthogonal of $W$ with respect to the sesquilinear form. The restriction of the form to $W\cap W^\perp$ is identically zero; therefore, the dimension of $W\cap W^\perp$ is at most the index of the sesquilinear form, which is finite. Since $W$ is preserved by $\mathcal{A}^n$, so is $W^\perp$: for $a\in\mathcal{A}^n$, $z\in W^\perp$ we have $\langle az,w\rangle=\langle z,a^\dag w\rangle=0$ for all $w\in W$, since $A^n$ is closed under the $\dag$-operation. Therefore, $W\cap W^\perp$ is preserved by $\mathcal{A}^n$, which implies that its projection to the first coordinate of the product $H\times\cdots\times H=H^n$ is preserved by $\mathcal{A}$. This is a finite-dimensional subspace of $H$ preserved by $\mathcal{A}$, which contradicts the fact that $\mathcal{A}$ acts irreducibly on $H$. \\
     
     We have shown that $W$ is nondegenerate, which means that there exists an orthogonal projection $p: H^n\rightarrow W$. This projection commutes with every $a\in\mathcal{A}^n$: for all $v\in H^n$ we can write $v=w+w^\perp$ with $w\in W$, $w^\perp\in W^\perp$, and we have $pav=paw+paw^\perp=aw=apv$, where we have used that both $W$ and $W^\perp$ are preserved by $a\in\mathcal{A}^n$. Therefore, we have $p\in(\mathcal{A}^n)'$, which means that $p$ commutes with every $\alpha\in(\mathcal{A}^n)''$. This means that every such $\alpha$ preserves $W$: for all $w\in W$ we have $\alpha w=\alpha pw=p\alpha w$ so $\alpha w\in W$. Choosing $\alpha$ to be the operator $\widetilde{h}$ sending $\overline{v}=(v_1,\ldots,v_n)$ to $\widetilde{h}\overline{v}=(hv_1,\ldots,hv_n)$, we find that $\widetilde{h}\overline{v}$ is in the closure of $\mathcal{A}^n\overline{v}$, which means that for every $\epsilon>0$ there exists $f\in\mathcal{A}$ such that $||hv_i-fv_i||<\epsilon$ for $1\leq i\leq n$, as we wanted to prove. \\
\end{proof}

We have completed the proof of Theorem \ref{dense}. We are now ready to prove the admissibility of our representations. We will put together arguments from the first proof of admissibility of irreducible algebraic representations of the automorphism group of a regular tree, by Ol'shanskii \cite{olsh2}, and from the proof of admissibility of irreducible unitary representations of linear connected reductive groups (\cite{knapp2}, Theorem 8.1). \\

\begin{thm} \label{admissC}
    Let $G<\mathrm{Aut}(T)$ be as in Theorem \ref{Main}, and let $(\pi,V)$ be a continuous irreducible representation of $G$ in the complex Hilbert space $V$, preserving a continuous strongly nondegenerate sesquilinear form of finite index. Then $(\pi,V)$ is admissible.
\end{thm}

\begin{proof}
    It is enough to show $\dim{H^U}<\infty$ when $U$ belongs to a fixed basis of compact open neighborhoods of the identity in $G$. We can choose the basis of neighborhoods $(U_n)_{n\in\mathbb{N}}$, where $U_n$ is the pointwise stabilizer of the set of vertices of $T$ at distance $\leq n$ from a fixed base vertex $o$. \\

    For every $n\in\mathbb{N}$, the algebra $E(U_n)=\chi_{U_n} * C_c^\infty(G) * \chi_{U_n}$ acts irreducibly on $H^{U_n}$ (Lemma \ref{irred}). \\
    
    As explained in Lemma 2 of \cite{olsh2}, we have $E(U_n)=E_n * F_n * E_n$, where $E_n=\{f\in E(U_n): supp(f)\subseteq U_0\}$ and $F_n=\{f\in E(U_n): supp(f)\subseteq\bigcup_{k\geq0}{U_n t^k U_n}\}$, with $t$ denoting an element of $G$ that moves by one all vertices in a geodesic passing through the base vertex $o$ (or by two if the action of $G$ is not transitive on the vertices, in which case we only consider even values of $n$). If $n\geq 1$, $F_n$ is a subalgebra generated by a single element: the function with support $U_n t U_n$. On the other hand, $E_n$ has finite dimension $m=|U_0/U_n|$. \\

    As discussed in the proof of Theorem 8.1 in \cite{knapp2}, for every positive integer $d$ there exists a minimal $r(d)$ such that
    \begin{equation} \label{msi}
        \sum_{\varepsilon\in S_{r(d)}}{(\mathrm{sgn}\ \varepsilon)X_{\varepsilon(1)}X_{\varepsilon(2)}\cdots X_{\varepsilon(r(d))}}=0 \ \ \ \forall X_1,\ldots,X_{r(d)}\in M_d(\mathbb{C}),
    \end{equation}
    and this minimal $r(d)$ is a strictly increasing function of $d$. We call the identity \eqref{msi} the \textit{minimal standard identity} for a matrix algebra of degree $d$. \\
    
    We claim that the algebra $E(U_n)$ satisfies the minimal standard identity for a matrix algebra of degree $m^2$. If that were not the case, then there would exist $X_1,\ldots,X_{r(m^2)}\in E(U_n)$ such that the expression in \eqref{msi} is nonzero. We call $X$ the result of this expression, which is an operator in $E(U_n)$. Consider now a continuous irreducible admissible unitary representation $(\phi,S)$ of $G$ on a Hilbert space $S$ that satisfies $\phi(X)\neq 0$; such a representation exists by Proposition \ref{exadm}. By Lemma \ref{irred}, $E(U_n)$ acts irreducibly on $S^{U_n}$. Let $r=\dim{S^{U_n}}$; if we identify $S^{U_n}$ with $\mathbb{C}^r$ we have $\phi(E(U_n))=\mathrm{End}(\mathbb{C}^r)$ because irreducibility implies density of the algebra in $L(\mathbb{C}^r)=\mathrm{End}(\mathbb{C}^r)$ (using Theorem \ref{dense} for unitary representations, which is a classical result). We know that $\phi(E(U_n))=\phi(E_n)\phi(F_n)\phi(E_n)$, where $\phi(E_n)$ has dimension at most $m$ and $\phi(F_n)$ is a subalgebra of $\mathrm{End}(\mathbb{C}^r)$ generated by a single element and therefore has dimension $\leq r$. Therefore, we have $m^2 r\geq\dim{\phi(E(U_n))}=r^2$, and hence $r\leq m^2$. This means that $\mathrm{End}(\mathbb{C}^r)$ satisfies the minimal standard identity for a matrix algebra of degree $m^2$, which contradicts $\phi(X)\neq 0$. \\

    Now we turn back to our original representation $(\pi,H)$ and claim $\dim{H^{U_n}}\leq m^2$. We know that the algebra $E(U_n)$ acts irreducibly on $H^{U_n}$ and satisfies the minimal standard identity for a matrix algebra of degree $m^2$. By Theorem \ref{dense} the algebra $C_c^\infty(G)$ is dense in $L(H)$ in the strong operator topology. This implies that the algebra $E(U_n)=\chi_{U_n} * C_c^\infty(G) * \chi_{U_n}$ is dense in $\chi_{U_n} * L(H) * \chi_{U_n}=L(H^{U_n})$. Therefore, the minimal standard identity holds for the algebra $L(H^{U_n})$, which implies $\dim{H^{U_n}}\leq m^2$. \\
\end{proof}

Theorem \ref{admissC} provides admissibility of representations in the case $\mathbb{K}=\mathbb{C}$. We are left to show that this also implies admissibility in the cases $\mathbb{K}=\mathbb{R}$ and $\mathbb{K}=\mathbb{H}$. A fundamental step to this end is the following:

\begin{prp} \label{subirredC}
    Let $(\pi,H)$ be a continuous topologically irreducible $\mathbb{R}$-representation of $G$, preserving a continuous strongly nondegenerate indefinite bilinear form of finite index on the real Hilbert space $H$. Then its complexification $H_{\mathbb{C}}$ (defined as the tensor product $H\otimes_{\mathbb{R}}\mathbb{C}$) contains a closed $G$-invariant $\mathbb{C}$-subspace which is a topologically irreducible $\mathbb{C}$-subrepresentation of $H_{\mathbb{C}}$.
\end{prp}

\begin{proof}
    We assume $H$ to be infinite-dimensional; otherwise the result is trivial. The complexification $H_\mathbb{C}$, when viewed as a real vector space, is isomorphic to the direct sum $H\oplus H$, with $G$ acting by $\pi$ on each summand. If $W\subseteq H_\mathbb{C}$ is a nonzero closed $G$-invariant $\mathbb{C}$-subspace, it is also a closed $G$-invariant $\mathbb{R}$-subspace of $H\oplus H$. As $H$ is irreducible, the projection of $W$ to each summand must be trivial or dense. This implies that if $W\neq\{0\}$, then $W$ must be infinite-dimensional. Moreover, since $G$ preserves the bilinear form on $H$ (which naturally extends to a sesquilinear form on $H_\mathbb{C}$), the orthogonal space $W^\perp$ is also a closed $G$-invariant $\mathbb{C}$-subspace of $H_{\mathbb{C}}$. Then so is its intersection $W\cap W^\perp$, which is finite-dimensional since the form has finite index. Therefore, $W\cap W^\perp=\{0\}$, which means that $W$ must be nondegenerate (i.~e.~the sesquilinear form restricted to $W$ is nondegenerate). \\

    Let $\mathcal{W}$ be the family of all nonzero closed $G$-invariant $\mathbb{C}$-subspaces of $H_\mathbb{C}$ where the sesquilinear form is nondegenerate and indefinite, ordered by inclusion. Clearly $\mathcal{W}$ is not empty, as it contains $H_\mathbb{C}$. By the Hausdorff maximal principle, this partially ordered set has a maximal totally ordered chain, which we may call $\mathcal{C}$. We claim that the intersection $W_{\infty}$ of all $W\in\mathcal{C}$ is in $\mathcal{W}$, which would imply, by maximality of $\mathcal{C}$, that $W_{\infty}\in\mathcal{C}$ and $W_{\infty}$ is minimal in $\mathcal{W}$. \\
    
    Clearly, $W_{\infty}$ is closed and $G$-invariant, and if $W_{\infty}\neq\{0\}$ the restriction of the sesquilinear form to $W_{\infty}$ must be nondegenerate by the argument above. Each $W\in\mathcal{C}$ has a finite index $k(W)$, which is the dimension of a maximal negative definite subspace in $W$ (while the maximal dimension of a positive definite subspace is infinite). Clearly, $W\subseteq W'$ implies $k(W)\leq k(W')$. Therefore, if we let $k_0\geq 1$ be the minimal index of any $W\in\mathcal{C}$, we might fix a $W_0\in\mathcal{C}$ with $k(W_0)=k_0$ and restrict the chain $\mathcal{C}$ to $\mathcal{C}_0=\{W\in\mathcal{C}: W\subseteq W_0\}$; these subspaces have all index $k_0$ and their intersection is still $W_{\infty}$. For every $W\in\mathcal{C}_0$, the orthogonal complement $W^{(\perp)}$ of $W$ in $W_0$ is positive definite. This implies that the orthogonal complement $W_{\infty}^{(\perp)}=(\bigcap_{W\in\mathcal{C}_0}{W})^{(\perp)}=\overline{\bigcup_{W\in\mathcal{C}_0}{W^{(\perp)}}}$ is also positive definite, which means that $W_{\infty}$ is nonzero and has index $k_0$. Therefore, $W_{\infty}\in\mathcal{W}$. \\

    Finally, we claim that $W_{\infty}$ is a topologically irreducible $\mathbb{C}$-subrepresentation of $G$. We have shown that any nonzero closed $G$-invariant $\mathbb{C}$-subspace of $H_\mathbb{C}$ must be nondegenerate, so any nonzero closed $G$-invariant $\mathbb{C}$-subspace $W'$ of $W_{\infty}$ would be nondegenerate, and either $W'$ or its orthogonal complement in $W_{\infty}$ would be indefinite. \\
\end{proof}

Now, in the setting of Proposition \ref{subirredC}, we can say that the irreducible $\mathbb{C}$-subrepresentation $W_{\infty}$ of $H_\mathbb{C}$ is admissible due to Theorem \ref{admissC}. This means that for every open compact $K<G$, the subspace $(W_{\infty})^K$ is finite-dimensional. We also know that $W_{\infty}$, viewed as a real representation of $G$, has a $G$-equivariant injective dense embedding into $H$ (we consider the projections to each summand of $H_\mathbb{C}=H\oplus H$, whose kernel is trivial or full and whose image is trivial or dense). Therefore, we may consider $W_{\infty}$ as a dense real $G$-subspace of $H$. Since the projection $H\rightarrow H^K$, defined as the integration over $K$, is continuous, the subspace $(W_{\infty})^K$ is dense in $H^K$. As $(W_{\infty})^K$ is finite-dimensional, this means that $(W_{\infty})^K=H^K$ and $H^K$ is finite-dimensional, which shows admissibility in the case $\mathbb{K}=\mathbb{R}$. \\

In the case $\mathbb{K}=\mathbb{H}$, we start from a continuous topologically irreducible $\mathbb{H}$-representation $(\pi,H)$ of $G$, and we can define $H_\mathbb{C}$ to be $H$ with the scalars restricted to $\mathbb{C}$; we have that the quaternionification $H_\mathbb{C}\otimes_\mathbb{C}\mathbb{H}$ is isomorphic to $H\oplus H$, and we can repeat the same proof as in the case $\mathbb{K}=\mathbb{R}$, replacing complexifications with restrictions of scalars from $\mathbb{H}$ to $\mathbb{C}$, and restrictions of scalars from $\mathbb{C}$ to $\mathbb{R}$ with quaternionifications. For further information about the interplay between real, complex, and quaternionic representations, a useful resource is (\cite{brodieck}, Chapter II.6).

\printbibliography[heading=bibintoc]

\end{document}